\newcommand{\R}{\mathbb{R}}
\newcommand{\N}{\mathbb{N}}
\newcommand{\G}{\mathbb{G}}
\newcommand{\C}{\mathbb{C}}
\newcommand{\dH}{\mathbb{H}}
\newcommand{\dP}{\mathbb{P}}
\newcommand{\dE}{\mathbb{E}}
\newcommand{\Dim}{\mathrm{dim}}
\newcommand{\Span}{\mathrm{Span}}
\newcommand{\ess}{\mathrm{ess}}
\newcommand{\cX}{\mathcal{X}}
\newcommand{\cL}{\mathcal{L}}
\newcommand{\cA}{\mathcal{A}}
\newcommand{\cv}{\mathcal{\nu}}
\newcommand{\cV}{\mathcal{V}}
\newcommand{\cH}{\mathcal{H}}
\newcommand{\tr}{\textrm{Tr}}
\newcommand{\norma}[1]{\left \|#1\right \|}
\newtheorem{defi}{Definition}[section]
\newtheorem{lemma}{Lemma}[section]
\newtheorem{teo}{Theorem}[section]
\newtheorem{coroll}{Corollary}[section]
\newtheorem{rem}{Remark}[section]
\newtheorem{prop}{Proposition}[section]
\newtheorem{ex}{Example}[section]
\begin{document}
\title[evolution by mean curv. flow in sub-Riem. geom.: a stoch. approach]{Evolution by mean curvature flow in sub-Riemannian geometries:  a stochastic approach.}
\author{Nicolas Dirr, Federica Dragoni, Max von Renesse}
\date{}
\begin{abstract} 
We study the phenomenon of  evolution by horizontal mean curvature flow in sub-Riemannian geometries. We use a stochastic approach to 
prove the existence of a generalized evolution in these spaces. In particular we show that the value function  of suitable family of stochastic control problems  solves in the viscosity sense the level set equation for the evolution by horizontal mean curvature flow.
\end{abstract}
\maketitle
%\tableofcontents
%---------------------------------------
%-------------------------------------
\section{Introduction.}
In Euclidean spaces, the motion by mean curvature flow  of a hypersurface is a geometrical evolution such that the normal velocity at each point of the hypersurface is
 equal to  mean curvature at that point. 
 Unfortunately, even  smooth surfaces  can develop singularities in finite time, so a weak notion  of evolution is necessary.\\
  A well-known  example in $\R^3$ is the surface given by two huge spheres, smoothly connected by a long straight cylinder. When that surface  evolves by mean curvature,  it splits, in finite time,  in two connected components which are topological spheres.  
At the moment when this happens, the surface intersects itself. In the points of the self-intersection, the normal and so the mean curvature   are not defined (e.g. \cite{dumbbell}).\\
 Different notions of generalized evolutions have been introduced in order to study the evolution of surfaces beyond the formation of singularities. 
Brakke defined in \cite{Brakke} a varifold-based concept of weak solution which provides existence but no uniqueness, while  a variational approach was developed by Almgren, Taylor and Wang~\cite{ATW:93} and Luckhaus and  Sturzenhecker~\cite{LS:95}.
Another approach, the so-called barrier solutions, has been introduced by E. De Giorgi~\cite{DG:94} and developed by  G. Bellettini, M. Paolini and M. Novaga \cite{{BePa:95},{BN:97}}, while  a more recent approach was found by G. Barles and P.E. Souganidis, \cite{souganidis}.\\
The notion that we are going to use follows  a nonlinear-PDE-approach, found for the first time in 1991,  independently, by 
Chen-Giga-Goto (\cite{Giga}) and 
Evans- Spruck (\cite{EvSp:91}). Roughly speaking, the idea consists in associating a PDE to a smooth hypersurface evolving by mean curvature flow such 
that  the function (of space and time) which solves this PDE has level sets which evolve by mean curvature flow. Then one can define the  solutions of the ``generalized evolution by mean curvature flow'' as the hypersurface given by the zero-level sets of the viscosity solution of this PDE.
 In this paper we  study the corresponding evolution in sub-Riemannian geometries with the help of stochastic control methods.\\
 Sub-Riemannian geometries are degenerate Riemannian spaces where the Riemannian inner product is defined just on a sub-bundle of the tangent bundle. To be more precise, we will consider $X_1,...,X_m$ smooth vector fields on $\R^n$  and a Riemannian inner product defined on the distribution $\cH$ generated by such vector fields. We assume that the vector fields verify the H\"ormander condition which means that the associated Lie algebra is equal to $\R^n$ at any point. This condition has many important consequences. The main one is that we can always connect  two points by an admissible path. i.e. an absolutely continuous curve such that the velocity belongs to the distribution for almost any time (Chow's Theorem). Therefore we can always define an associated distance $d(x,y)$ on the whole $\R^n$.
 These spaces are topologically equivalent to the Euclidean $\R^n$ but the metric is not. In fact, instead of the equivalence, we have only the following inequality
 $c|x-y|\leq d(x,y)\leq C|x-y|^{\frac{1}{k}}$, for some $c,C>0$ constants and $k>1$. 
 %NEW
 For some recent applications of these geometries we refer to the papers by Citti and Sarti on the visual cortex (\cite{citti}).
 %Moreover doesn't make sense, as the previous paragraph is about the visula cortex
In a  sub-Riemannian space
 it is possible to define intrinsic derivatives of any order taking the derivatives along the vector fields $X_1,...,X_m$. 
 That allows us to write differential operators like Laplacian, infinite-Laplacian etc, using intrinsic derivatives. We call these operators ``horizontal''. 
In particular we can define a notion of horizontal mean curvature flow.\\
 While there are many results for evolution by mean curvature flow in  the  Euclidean setting,
 only little is known in these degenerate spaces.
This evolution in a sub-Riemannian manifold 
is very different from the corresponding Euclidean motion, in particular because of the existence of the  so-called characteristic points,
which are points where the Euclidean normal is perpendicular to the horizontal space and so not admissible. 
 Even if their geometrical meaning is very different, at the level of the level-set PDE, they look almost like the  Euclidean "singularities," 
i.e. like the points where the level-set function has vanishing gradient. While in the Euclidean case the only  
"problematic" value of the gradient is zero, the corresponding set of gradients in our case is space-dependent and 
has, at each point,  nonzero dimension. 
The different nature of these degeneracies creates serious difficulties in applying most of those techniques which are known to
work for the Euclidean setting.
To avoid the problems created from the presence of these singularities, we will use a stochastic approach for showing  existence of solutions.\\

%STOCH PART:  TO FINISH!!!
A connection between a  certain stochastic control problem 
and a  large class of geometric evolution equations, including 
the (Euclidean) evolution by mean curvature flow,  has been found by
Buckdahn,  Cardaliaguet and Quincampoix 
(in \cite{buckdahn}) and  Soner and Touzi (in \cite{{soner3},{soner1}}).
  The control, loosely speaking, constrains  the increments of the stochastic process
to a lower dimensional subspace of $\R^n,$ while the cost functional consists only of the terminal cost but
involves an essential supremum over the probability space.
It turns out that the value function solves
the level set equation associated with the geometric evolution. Moreover, one can show that the set of points from which the initial hypersurface can be reached
almost surely in a given time by choosing an appropriate control coincides with the set evolving by mean curvature flow. 
This stochastic approach generalizes very naturally to sub-Riemannian geometries.
Instead of {constraining} the Euclidean Brownian motion, we use an intrinsic Brownian motion associated with the 
sub-Riemannian geometry. This allows us to obtain certain existence results in general sub-Riemannian manifolds which 
were previously unknown.\\
%There is a very recent preprint by Capogna and Citti (\cite{capognaCitti}) where the authors obtain the existence of the solution of the level set equation in Carnot groups by using different techniques. Moreover they prove comparison principles under particular assumptions on the initial datum.\\
Our construction of controlled paths  yields an analogue to the processes considered for the Euclidean case in \cite{buckdahn,soner3,soner1}, which 
could be called \textit{locally codimension one constrained Brownian motion}. In the Euclidean case, any control $\cv(s)$, 
taking values in the space of co-rank-one orthogonal projections induces a \emph{locally codimension one constrained  Brownian motion $B^\cv$} as solution of the following It\^o SDE $dB^{\cv} = \cv(s) dB$. 
In the present sub-Riemannian case, in order to define the  locally codimension one constrained  or unconstrained   Brownian motion, some extra care has to be taken  due to the  geometry. 
 We define an  ``horizontal Brownian motion''  as the stochastic process whome  generator is the horizontal Laplacian operator $\Delta _0 = \sum_{i=1}^m X_i^2 .$ 
 We  would  like to remark that, unlike the Euclidean case, the  horizontal  Laplacian has in general also a first order part, coming from 
taking the derivatives of the vector fields. 
The construction of the associated \emph{unconstrained horizontal Brownian motion}  by means of the following Stratonovich SDE, is natural:
$d\xi(s) = \sum_{i=1}^m X_i(\xi(s)) \circ dB^{i}(s),$ 
 where  $B=(B^1, \dots, B^m)$ is a standard Brownian motion in $\R^m$.\\
Replacing in previous Stratonovich SDE  the unconstrained Brownian motion  $B$  by a 
locally codimension one constrained  Euclidean Brownian motion $B^\cv$ in $\R^m$,  we get the \emph{locally constrained codimension one horizontal Brownian motion} $\xi^{\cv}(s)$ associated to $\Delta_0$ and $\cv(s)$, which  constitutes a  controlled path for  our problem.\\
 Note that, in the sub-Riemannian case, the process defined by the Stratonovich 
SDE above differs from the one defined by the corresponding It\^o integral just in the ``vertical direction'' (a consequence of \cite{garofalo}), so that their horizontal projections coincide. Moreover, in the Heisenberg group, it is possible to show, by an explicit calculation, that they are the same also in the ``vertical'' direction.\\
%CONCLUSION:
  The value function associated to this stochastic control problem is defined as the  infimum, over the admissible controls, of the 
essential   supremum of the final cost $g$ (at some fixed terminal time $T>t$),   for the controlled path $\xi^{\cv}$  starting from $x$ at the time $t$. 
We can show that this value function is a representation formula for the generalized evolution by horizontal mean curvature flow, i.e.  solves in the (discontinuous) viscosity sense the equation 
$-v_t+H(x,Dv,D^2v)=0$ where $H(x,Dv,D^2v)=\Delta_0 v-\Delta_{0,\infty}$, with terminal condition $g$.
That means that $u(t,x ):=v(T-t,x)$ is a viscosity solution of $u_t+H(x,Du,D^2u)=0$ with initial condition $g$, which is exactly the level set equation of the  evolution by horizontal mean curvature flow. 
So $\Gamma(t)=\{x\in \R^n\,|\, u(t,x)=0\}$ is a 
(discontinuous) generalized evolution by horizontal mean curvature flow in general sub-Riemannian  manifolds. Whenever comparison principles  hold, we are able to show that such an evolution is also continuous. Unfortunately, comparison principles  are still 
an open problem in sub-Riemannian geometries. As far as we know, there exists  just  a recent preprint by Capogna and Citti (\cite{capognaCitti}) where the authors obtain  comparison principles in Carnot groups but only for  a  particular class of initial data.
%new
In the same paper, the authors show also the existence, using a definition of generalized motion by hoizontal mean curvature flow slightly different from our notion (but equivalent at least in the Euclidean case). Their approach is very different from ours. In fact, they get the solution not by a representation formula but as limit of solutions of suitable approximating non-degenerate parabolic PDEs.
\\
%%%
The organization of the paper is the following.\\

%Sec 2
In Section 2 we introduce  sub-Riemannian geometries. We recall the definitions of the H\"ormander condition and the special case of Carnot groups. The main example for a Carnot group is the Heisenberg group hence, from time to time,  we will  focus on such a particular geometry.  Then we give the definition of horizontal mean curvature and of the characteristic points.  Note that,
%the characteristic points play the role of singularities in the Euclidean case. As 
there are very   few  interesting
hypersurfaces  without characteristic points
 (for example in the Heisenberg group, any compact surface topologically equivalent to a sphere, has at least one characteristic point). Therefore existence results for short times with ``smooth'' initial data are not as helpful as in the Euclidean case.\\
 We conclude the section looking closer at the case of the Heisenberg group and  giving several explicit examples in this geometry.\\
%Sec 3

In Section  3 we introduce a notion of generalized evolution by mean curvature flow, following the level set formulation introduced by Chen-Giga-Goto in \cite{Giga} for the corresponding Euclidean evolution.  So we associate to the evolving surface a degenerate parabolic equation, where the degeneracy arises   where the horizontal gradient vanishes, as it happens  at the characteristic points.  At these points, the differential operator as a function
of first and second derivatives at the point, is not continuous
any more. In the Euclidean case this situation arises only 
when the gradient is zero,  i.e. a set of dimension zero
in $\R^{n\times n}\times \R^n.$ In the sub-Riemannian case, however,
this set is both of non-zero dimension and depend on space, which
makes the analysis far more difficult.\\

%Sec 4
 In Section  4 we define   and study a stochastic control problem,   whose
associated value function   solves  in the viscosity sense the level set equation for the evolution 
by horizontal mean curvature flow, introduced in section 3. We introduce a family of   (Stratonovich) stochastic ODEs driven by a 
``horizontal constrained Browinian motion'' and we will show that the associated generator is exactly the horizontal Laplacian. 
 Moreover we study  some properties of value function. 
In particular, we show a property which implies that the time evolution of its zero level set depends on the terminal value
only through the zero level set, and, moreover, that 
in Carnot groups the value function is bounded and continuous in space whenever the terminal cost is bounded and uniformly continuous and 
has a  a  limit as $|x|\to \infty$.\\
%%%%ADD continuity in time: if we find!

%Sec5
 In Section 5   we show that the value function is a bounded and lower semicontinuous viscosity solution of the level set equation for the  evolution by horizontal mean curvature flow in the sub-Riemannian case.  We first derive the PDE solved by the value function, 
assuming more regularities for the solution. This proof  gives also a justification on why the optimal control is, at any point, the projection on the horizontal tangent space of the level set. 
 We give several examples of sub-Riemannian geometries covered by our existence result. We conclude dealing  with the continuity of the  stochastic representation formula found. 
We show that if there exist comparison principles for the degenerate parabolic PDE introduced  in section 3,  the value function is   continuous in any sub-Riemannian geometry and, therefore,  it is a classic viscosity solution in the sense of Crandall and Lions. Nevertheless, as we already remarked, comparisons are known just in very few cases.\\

 In the Appendix we give some of the technical proofs omitted in Section 5. In particular  we will give  a rigourous viscosity proof of the main existence result.\\

%%%%%
%%%%%%%
%-------------------------------------------
  {\em Acknowledgements.} We would like to thank  sincerely Luca Capogna for helpful discussions and for making a preprint  on the 
level set equation for horizontal mean curvature flow available  at an early stage, and we would like to express our thanks to  
Roberto Monti  for deep discussions and the proof of Lemma \ref{monti}. 
The second named author would like to thank Luca Mugnai for the constant support and the long useful conversations. 
This research was partially supported by the DFG through Forschergruppe 718.

%%%%%%%%%%%%%

\section{Mean curvature in sub-Riemannian geometries.}
\subsection{Sub-Riemannian geometries and Carnot groups.}$\quad$
\\
In this section we recall briefly what  sub-Riemannian 
geometries and Carnot groups are (for more information, see \cite{{bel},{CarnotGroup},{montgomery}}).\\
%----------------
Let $X_1(x),...,X_m(x)$ a family of smooth vector fields  on $\R^n$ 
and
$$\cH_x=\mathrm{Span}(X_1(x),...,X_m(x))$$ then
$\cH=\{(x,v)\,|\,x\in \R^n,v\in \cH_x\}$ is a
\emph{distribution} on $\R^n$.
\begin{defi} A
\emph{sub-Riemannian metric} in $\R^n$ is a Riemannian metric
$\left<\cdot,\cdot\right>_g$ defined on the fibers of a distribution
$\cH$.
\end{defi}
An  absolutely continuous curve $\gamma:[0,T]\to \R^n$ is
called  \emph{horizontal}, if and only if,
$\dot{\gamma}(t)\in\cH_{\gamma(t)}$, a.e. $t\in [0,T]$, i.e.  
\begin{equation}
\label{safari}
\dot{\gamma}(t)=\sum_{i=1}^m \alpha_i(t)X_i(\gamma(t)),\quad \textrm{a.e.}\; t\in [0,T].
\end{equation}
 For any  
horizontal curve, we defined a length-functional  as
$$
l(\gamma)=\int_0^T |\dot{\gamma}(t)|_g dt,
$$
with
$|\dot{\gamma}(t)|_g=\left<\dot{\gamma}(t),\dot{\gamma}(t)\right>_g^{\frac
{1}{2}}$. From now to on, we  choose the Riemannian metric $\left<\cdot,\cdot\right>_g$ such that the vector fields $X_1,...,X_m$ are orthonormal, that means, by \eqref{safari}
$$
l(\gamma)=\int_0^T \sqrt{ \alpha_1^2(t)+...+\alpha_m^2(t)} \;dt,
$$
Once defined the length-functional we can introduce the following distance
\begin{equation}
\label{C-Cdistance}
 d(x,y):=\inf\{ l(\gamma)\,|\,
\gamma\;\textrm{horizontal curve joining $x$ to $y$}\}.
\end{equation}
Let $m(x)=\Dim( \Span(\cH_x)).$ If $m(x)=n$, at any point $x\in \R^n$, 
the distribution generates the whole tangent bundle  so the 
Riemannian metric $\left<\cdot,\cdot\right>_g$  induces by  \eqref{C-Cdistance} a Riemannian distance  on $\R^n$.
Otherwise, if $m<n$ at 
some point, this is not possible and  \eqref{C-Cdistance} can 
be infinite for some pairs $(x,y)$. The so-called
H\"ormander  condition, which we will explain below, 
guarantes that $d(x,y)$ remains finite.\\
Recall that the Lie bracket between
two vector fields $X$ and $Y$ is defined as the vector field which acts
on smooth real functions by
 $[X,Y]f=X(Yf)-Y(Xf)$. Let $\cL^1=\{X_1,...,X_m\}$, $\cL^2=\{[X_i,X_j]|
\,i,j=1,...,m\}$ and, for $k>2$,
$\cL^k=\{[Y_i,Y_j]\,|\,Y_i\in \cL^h,Y_j\in \cL^l,
h,l=1,...,k-1\}\backslash \bigcup_{i=1}^{k-1}\cL^i$, then the
\emph{Lie algebra} associated to the distribution $\cH$ is the set
$\cL=\bigcup_{i\in \N}\cL^i$.
\begin{defi}
We say that the distribution $\cH$ satisfies  the \emph{H\"{o}rmander
condition}  if there exists $r\ge 1$ so that $\cL=\bigcup_{i=1}^r\cL^i$ and
${\rm span}(\cL)=\R^n$ at any point.  
The number $r$ is called \emph{step} of the distribution.
In such a case, we call the function \eqref{C-Cdistance}, induced by the distribution $\cH,$ a \emph{sub-Riemannian} (or \emph{Carnot-Carath\'eodory}) 
distance on $\R^n$, and
the triple $(\R^n, \cH, \left<\cdot,\cdot\right>_g)$ 
is called a \emph{sub-Riemannian geometry}.
\end{defi}
The main consequence of the H\"{o}rmander
condition is that the associated sub-Riemannian
distance is finite (Chow's Theorem).
Moreover, the H\"{o}rmander condition implies  that the associated
distance is continuous in $\R^n$ with respect to the Euclidean topology, and
it implies the existence of minimizing geodesics. The geodesics in these geometries are usually not locally unique.\\

%Carnot groups
Carnot groups are particular sub-Riemannian geometries, where a 
structure of Lie  group is defined. 
We recall  briefly the main definitions.  For more details we refer to \cite{{capogna},{garofalo},{CarnotGroup}}.
Let $\G=(\R^n,\cdot)$ a Lie group and   $g$ the stratified Lie 
algebra of the left-invariant vector fields.
\begin{defi} A Carnot group is a Lie group $\G=(\R^n,\cdot)$, nilpotent and simply connected, whose Lie algebra $g$ admits a stratification, i.e. it can be written as $g=\oplus_{i=1}^rV_i$, where $V_i$ satisfy the property
$$
V_{i+1}=[V_1,V_i] \neq \emptyset  \ \textrm{for}\  i=1,\ldots,r-1 \quad \textrm{and}\quad [V_1,V_r]=\{0\} 
$$
 $\R^n$ endowed with the distribution $\cH=V_1$ and the Euclidean metric on it, is a sub-Riemannian geometry with step $r$.
\end{defi}
On a Carnot group we can define a family of dilations 
$\delta_{\lambda}(x)=\lambda^i x$, whenever $x\in V_i$. Note that
$
\norma{\delta_{\lambda} (x)}_{CC}=\lambda\norma{x}_{CC}
$,
where $\norma{x}_{CC}=d(x,0)$ is the Carnot-Carath\'eodory norm 
defined on $\G$.
Moreover, in any Carnot group, it is possible to define an homogenous 
norm, that is
$$
 \norma{x}_0:=\left( \sum_{i=1}^{r}
|x_i|^{\frac{2r!}{i}}\right)^{\frac{1}{2r!}}
$$
with $x=(x_1,...,x_k)$ and $x_i\in V_i$. This norm is indeed homogeneous w.r.t. the dilations defined on the 
Carnot group and it is equivalent to but more useful than the 
Carnot-Carath\'eodory norm. In 
fact, explicit calculations are easy and, if we define the homogenous 
distance, setting
$$
d_0(x,y)=\norma{y^{-1}\cdot x}_0
$$ 
where $y^{-1}$ is the inverse of $y$ w.r.t. the group multiplication $\cdot$,
then   
$d_0^{\alpha}$ is smooth for $\alpha=2r!,$ where $r$ is the 
step of the distribution.
By using the dilations and the group operation one can get 
far stronger results than in the general sub-Riemannian case.
In particular, it is possible to define intrinsic regularizations by inf- and 
sup-convolutions  (see \cite{Wang}, Definition 3.1 and Proposition 3.3) 
which imply many uniqueness results for visocity solutions of nonlinear equations which are  still open in the 
general case of H\"ormander vector fields. \\
We would like to conclude the section with some examples.
%Ex: H^1, Grusin, rototraslation, 
\begin{ex}[Heisenberg group]
\label{Ex:Heisenberg}
The most important sub-Riemannian geometry is the (1-dimensional) 
Heisenberg group $\dH^1$ which is the Carnot group  $(\R^3, \cdot)$, 
with law 
$$(x,y,t)\cdot(x',y',t')= \left(x+x',y+y', t+t' + \frac{xy'-yx'}{2} \right)
$$
and $(x,y,z)^{-1}=(-x,-y,-z)$.
Set $p=(x,y,z)\in \R^3$, the associated Lie algebra given by the left-invariant vector fields is $ X(p) =(1,0,-y/2)^T$ and $Y(p)=(0,1,x/2)^T$.
The  bracket relations are $[X,Y]=T=(0,0,1)^T$ and $[X,T]=[Y,T]=0$.
 Hence  $\dH^1$ is a sub-Riemannian geometry with step $r=2$.\\
The differential of the left-translations is $L_p:\dH^1\to \dH^1$, $q\to p
\cdot q$ in $\dH^1$
$$dL_p = \left( 
\begin{array}{ccc}
 1& 0 & 0\\
0& 1 &0  \\
-\frac y 2  & \frac x 2  & 1 
 \end{array}
\right)= - dL_p^{-1}
$$
The family of dilations is given by
$ \delta_{\lambda}(x,y,z)=(\lambda x,\lambda y,
\lambda^2
z)
$
and the homogeneous norm is
$
\norma{(x,y,z)}_0 =\big((x^2+y^2)^2
+z^2\big)^{\frac{1}{4}}
$.\\
In general the $n$-dimensional Heisenberg group $\dH^n$ is defined 
on $\R^{2n+1}$ and the group law, for any point $(x,y,z)\in \R^n\times
\R^n\times \R$, is given by $(x,y,z)\cdot(x',y',z')= (x+x',y+y', z+z' + \frac
{1}{2} (x\cdot y'-y\cdot x'))$ and all previous notions are still true.
\end{ex}
%%%
\begin{ex}[Gru\v{s}in plane]
\label{Ex:Grusin}
The Gru\v{s}in plane is the sub-Riemannian geometry
defined on $\R^2$,  by the distribution spanned by the two
vector fields
 $X_1(x,y)=(1,0)^T$  and $X_2(x,y)=(0,x)^T$. In this case the step of 
the distribution is 2 (as in the Heisenberg group) but the dimension of 
the distribution is not constant since it is  $m=1$ at the origin and 
$m=2$, otherwise.
Even if the Gru\v{s}in plane is not a Carnot group, its structure is not 
so different from 
the structure of $\dH^1$.
In fact, it is possible to define dilations and an homogeneous norm, 
  which are $\delta_{\lambda}(x,y)=(\lambda x,\lambda^2 y)$
and $\norma{(x,y)}_0=|x|+|y|^{\frac{1}{2}}$.
%Like the Heisenberg group is in a certain sense the model geometry for Carnot groups, the Gru\v{s}in plane is the model for another important family of sub-Riemannian geometries: the \emph{Gru\v{s}in-type spaces}. They are  the sub-Riemannian geometries induced on $\R^n$, by the  vector fields
%$$
%X_1=\partial_{x_1},...,X_m=\partial_{x_m},Y_1=|x|^{\alpha}\partial_{y_1},...,Y_k=|x|^{\alpha}\partial_{y_k},
%$$
%with $1\leq m\leq n-1$, $k=n-m$ and $\alpha >0$.
\end{ex}
\begin{ex}[Roto-translation geometry]
\label{Ex:Visual}
The roto-translations geometry is generated on $\R^3$ by  $X_1(x,y,
\theta)=(\cos \theta,\sin \theta,0)^T$ and $X_2(x,y,\theta)=(0,0,1)^T$. It 
is a 2-step sub-Riemannian geometry which was introduced by Citti 
and Sarti in  \cite{citti}, in order to study the modal and amodal 
perceptual completion of the visual cortex.
\end{ex}
%
%TAKE OFF
%\begin{ex}[Martinet distribution]
%\label{Ex:Martinet}
%On $\R^3,$ let $X_1(x,y,z)=(1,0,-y^2)^T$ and $X_2(x,y,z)=(0,1,0)^T.$ The distribution $\cH$ spanned by $X_1$ and $X_2$, is known as \emph{Martinet distribution}.
%Its step is equal to 3 (,in fact, $[X_1,X_2]=(1,0,-2y)^T$ and $[[X_1,X_2],X_2]=(0,0,-2)^T$). It is the main example of a sub-Riemannian geometry with singular geodesics (see \cite{montgomery}). Note that in this case the coefficents of the vector fields are smooth but not (globally) Lipschitz. 
%\end{ex}}

%%%%
\subsection{Horizontal mean curvature.}$\quad$\\
We introduce the   notion of horizontal mean curvature in sub-Riemannian manifolds. There are various ways how to define mean curvature in such spaces.  
For more details and the links with the variation of the area  and the approximation  by corresponding Riemannian objects, we refer to \cite{capogna} in the Heisenberg group and \cite{{capognaCitti},{garofalo},{pauls}} in a more general context.\\

Given $X_1,....,X_m$   smooth vector fields on $\R^n$, satisfying the H\"ormander condition, we indicate with $M$  the associated sub-Riemannian manifold $(\R^n,\cH,\left<\cdot,\cdot\right>_g)$ and we recall that $\left<\cdot,\cdot\right>_g$ is built in such a way that $X_1,...,X_m$ are orthonormal, i.e. for  $v,w$ horizontal vectors, 
$$
\left<v,w\right>_g=\left<\alpha,\beta\right>
$$
where $\alpha$ and $\beta$ are the coordinate-vectors of $v$ and $w$ w.r.t. $X_1,...,X_m$,  and $\left<\cdot,\cdot\right>$ is the usual inner product in $\R^m$.\\
First we recall that the \emph{horizontal gradient}  of a function $u: M\to \R$ is the horizontal vector field defined as
$$
\Upsilon u (x) =(X_1 u) X_1(x)+...+(X_m u)X_m(x)\in \R^n
$$
From now  on, we will  often omit the  dependency on the point $x$ and use the  {\emph coordinate-vector field}   of $\Upsilon u $ w.r.t. $X_1,...,X_m$, that is
$$
\cX u  =(X_1 u,..,X_m u)^T\in \R^m
$$
Note that 
$$|\Upsilon u |^2_g= \sum_{i=1}^m   \big(X_i u\big)^2=|\cX u|^2$$
where $|\cdot|$ is the eculidean norm in $\R^m$.\\

Before giving the main definitions we want also to point out the following notation. We are interested in the study of hypersurface on sub-Riemannian manifolds so we have to treat two different kinds of tangent spaces: the tangent space of the manifold and the tangent space to the hypersurface. \\
 Fix a point $x\in M.$ In order to avoid confusion, we call \emph{horizontal space} 
the tangent space of the sub-Riemannian manifold, denoted by $H_x M,$ while the \emph{tangent space} and \emph{horizontal tangent space} are, respectively, the Euclidean tangent space of the hypersurface  $\Sigma \subset M$ and the intersection of the Euclidean tangent space with the horizontal space.
We indicate the latter two objects by $T_x \Sigma$ and $HT_x \Sigma$.
\begin{defi}
 Let  $\Sigma=\{u=0\}$ a hypersurface in $M$, we call   \emph{horizontal normal of $\Sigma$} the renormalized projection of the Euclidean normal on the horizontal space, which is
$$
n_0(x)=\frac{\Upsilon u}{|\Upsilon u|_g}
$$
We introduce the \emph{horizontal mean curvature}  as the horizontal divergence of the horizontal normal:
\begin{equation}
\label{limite1}
 k_0(x) :=\sum_{i=1}^m X_i\left( \frac{ X_i u}{|\cX u|}\right)
\end{equation}
\end{defi}
%REMARK THE HOR MC IS INTRINSIC????MOST LIKELY NOT, ACCORDING TO MANFREDI!!!
Unlike in the Euclidean case, the horizontal normal to a smooth hypersurface is not always well defined. In fact, whenever the Euclidean normal is ``vertical'', which means that its projection on the horizontal space vanishes,  then  $n_0$ and hence $k_0$ are  not defined. 
\begin{defi}
Given a hypersurface $\Sigma=\{u=0\}\subset M$, we call  \emph{set of the characteristic points} the set of the points where the Euclidean normal is perpendicular to the horizontal  space, that is
\begin{equation}
\label{characteristSet}
 char(\Sigma)\!=\!\{x\in M\,|\, H_x M\subset T_x\Sigma\}\!=\!\{x\in M\,|\, HT_x\Sigma=H_x M\}\!=\!\{x\in M\,|\,|\cX u|=0\}
\end{equation}
\end{defi}
As we will see later, the existence of characteristic points make the evolution by horizontal mean curvature flow much different from the corresponding Euclidean or Riemannian evolution.\\

%%
%second derivatives
 Let $u:\R^n\to \R$ be a smooth function.
In the Euclidean setting it is easy to show that the mean curvature of $\Sigma=\{x\in \R^n:\ u(x)=0\}$ is equal to the Laplacian minus the infinite-Laplacian, 
both divided by the modulus of the gradient. We next recall the definition of the corresponding horizontal operators and we  show that this fact is still true at the non characteristic points. 
First we recall that the symmetrized matrix of second derivatives is a $m\times m$ matrix defined as 
$$(\cX^2u)^*_{i,j}=\frac{X_i(X_ju)+X_j(X_iu)}{2}$$
We call  \emph{horizontal Laplacian} and \emph{horizontal  infinite-Laplacian},   respectively, the following second order operators:
 $$
\Delta_{0} u= \sum_{i=1}^m  X_i (X_i u),\quad \Delta_{0,\infty}u=\bigg<
(\cX^2u)^*\frac{\cX u}{|\cX u|},\frac{\cX u}{|\cX u|}\bigg>
 $$
 Then, as in the Euclidean and Riemannian case, it is immediate to show that
\begin{equation}
\label{HorMC}k_0(x) =|\cX u|^
{-1
}\big(\Delta_{0} u-\Delta_{0,\infty}u\big)
\end{equation}
%formulazione matriciale
For later use, we express all the previous objects by  the matrix associated to the 
sub-Riemannian geometry, the Euclidean gradient $Du$ and the Euclidean Hessian $ D^2u.$ So let 
 $\sigma(x)$   be  the smooth  $m\times n$  matrix defined as $\sigma(x)=[X_1(x),....X_m(x)]^T$,   then  
 the coordinate-vector of the horizontal gradient can be expressed as 
$$
\cX u(x)=\sigma(x) D u(x).
$$
The main point is to  express the symmetrized matrix of horizontal second derivatives, uing $\sigma(x)$.
In fact, the matrix $\big(\cX^2u\big)^*$ does not depend on just second order derivatives like the corresponding Euclidean one but also on first order derivatives coming from the  derivatives of the vector fields. To be more precise, one can write 
\begin{equation}
\label{panino}
\big(\cX^2u\big)^*=\sigma(x) (D^2u)\sigma^T(x)+A(X_1,...,X_m, Du)
\end{equation}
where the matrix $A$ is a symmetric $m\times m $ matrix defined as
\begin{equation}
\label{matricial}
A_{i,j}(X_1,...,X_m, Du)=\frac{1}{2} \left<\nabla_{X_i}X_j+\nabla_{X_j}X_i, Du\right>,
\quad \textrm{for}\; i,j=1,...,m
\end{equation}
and $\nabla_{X_i}X_j$ is the (Euclidean) derivative of the vector field $X_j$ w.r.t. the vector field $X_i$. Hence, it is possible to  rewrite   previous  horizontal second order operators as
\begin{equation}
\label{Laplacian-matrix}
\Delta_0 u= \tr\big(\sigma(x)( D^2u)\sigma^T(x)\big)+\sum_{i=1}^m \left<\nabla_{X_i}X_i, D u\right>
\end{equation}
and 
\begin{align}
\label{InftyLaplacian-matrix}
\Delta_{0,\infty} u
&= \bigg<\big(\sigma(x) (D^2u)\sigma^T(x)\big)\frac{\sigma(x) Du}{|\sigma(x) Du|},\frac{\sigma(x) Du}{|\sigma(x) Du|}
\bigg>\\
&+ \bigg<A(X_1,...,X_m, Du)\frac{\sigma(x) Du}{|\sigma(x) Du|},\frac{\sigma(x) Du}{|\sigma(x) Du|}
\bigg>
\end{align}
This paves the way for studying the horizontal mean curvature flow
by the techniques from stochastic control theory which we explain later.\\

%%%%%
We conclude  this section looking at the particular case of  the Heisenberg group. 
It is known  that in general $\nabla_{X_i}X_j$ is perpendicular to the horizontal space (see \cite{garofalo} for a proof in Carnot groups). An easy calculation  shows  that in the Heisenberg group $\nabla_{X_i}X_j+   \nabla_{X_j}X_i=0$, for every $i,j=1,2$.  So 
$$
\Delta_0 u= \tr\big(\sigma(x)( D^2u)\sigma^T(x)\big) \;
\textrm{and}\; 
\Delta_{0,\infty} u= \bigg<\big(\sigma(x) (D^2u)\sigma^T(x)\big)\frac{\sigma(x) Du}{|\sigma(x) Du|},\frac{\sigma(x) Du}{|\sigma(x) Du|}
\bigg>.
$$
That makes  it easier to study  explicit examples in the Heisenberg group.
 Hence, let $\Sigma=\big\{(x,y,z)\in \dH^1|u(x,y,z)=0\big\}$  be a surface in $\dH^1$, then we can explicitly calculate 
all previous quantities and, in particular, the set of characteristic points  becomes:
%n fact, the cordinate-vector of the horizontal gradient is  
%$$ \cX u=\begin{pmatrix} 1&0&-\frac{y}{2}\\ 0&1&\frac{x}{2}\end{pmatrix}\begin{pmatrix}u_x\\u_y\\u_z \end{pmatrix}=\begin{pmatrix} u_x-\frac{y}{2}u_z\\ u_y+\frac{x}{2}u_z\end{pmatrix}$$
%Therefore \eqref{characteristSet} becomes
\begin{equation}
\label{characteristicPoints1}
char(\Sigma)=\bigg\{(x,y,z)\in \Sigma  \;\big|\;\left(u_x  -\frac{y}{2}u_z\right)^2+\left(u_y+ \frac{x}{2}u_z\right)^2=0\bigg\}\end{equation}
%At  non characteristic  points, the horizontal mean curvature is given  by the formula:
%\begin{equation}\label{HeisMeanCurvature}\begin{aligned}|\cX u|^3 k_0(x,y,z)=&u_{xx}\left(u_y^2-xu_yu_z+\frac{x^2}{4}u_z^2\right)\\+&u_{yy}\left(u_x^2+yu_xu_z+\frac{y^2}{4}u_z^2\right)\\+&u_{zz}\left(\frac{x^2}{4}u_x^2- \frac{x^2 y^2}{8}u_z^2+\frac{y^2}{4}u_y^2 +\frac{xy}{2} u_xu_y\right)\\+&u_{xy}\left(-2u_xu_y+xu_xu_z-yu_yu_z+\frac{xy}{2}u_z^2\right)\\+&u_{xz}\left(yu_y^2+xu_xu_y-\frac{x^2}{2}u_xu_z-\frac{xy}{2}u_yu_z\right)\\+&u_{yz}\left(-xu_x^2-yu_xu_y-\frac{y^2}{2}u_y u_z -\frac{xy}{2}u_xu_z\right)\end{aligned}\end{equation}
%Formula \eqref{HeisMeanCurvature} {\clm let us easily study many  explicit examples.} 
Whenever   $\Sigma$ is invariant by rotation around the  
$z$-axis, i.e. $u(x)= |z|-f(r)$,  with $r=\sqrt{x^2+y^2}$, we get
\begin{equation}
\label{radialMCH}
k_0(x,y,z)= \pm\frac{\frac{1}{4}r^2 f''(r)+\frac{(f'(r))^3}{r}}{((f'(r))^2+\frac
{1}{4}r^2)^{\frac{3}{2}}}
\end{equation}
(depending if $z>0$ or $z<0$). In such a case  the set of the 
characteristic points is
\begin{equation}
\label{characteristicPoints2}
char(\Sigma)=\left\{(x,y,z)\in \Sigma \;\big|\;4\big(f'(r)\big)^2 +r^2  =0
\right\}
\end{equation}
Obviously the   only possible solutions are $r=0$ and $f'(0)=0$.\\ 
That means that the unique possible characteristic points are the flat 
intersection with the $z$-axis, i.e.  $(0,0, \pm f(0))$, with Neuman 
boundary condition $f'(0)=0$. Using \eqref{radialMCH}, it is easy to calculate the horizontal mean curvature in the following examples.
\begin{ex} [\cite{capogna}]
\label{variuosEx}$\quad$\\
\begin{description}
\item[(1)] {\bf Euclidean ball.} If $\Sigma=\{(x,y,z)\in \R^3| x^2+y^2
+z^2=R^2\}$,
$$
k_0=\frac{2(4+R^2)}{\sqrt{x^2+y^2}(4+z^2)^{\frac{3}{2}}}.
$$
The characteristic  points are $(0,0,\pm R)$.
\item[(2)] {\bf Kor\'anyi ball.} If $\Sigma=\{(x,y,z)\in \R^3| (x^2+y^2)^2
+16 z^2=R^4\}$, 
$$
k_0=\frac{3\sqrt{x^2+y^2}}{R^{2}}.
$$
The characteristic  points are $(0,0,\pm \frac{R^2}{4})$.
\item[(3)] {\bf Heisenberg ball.} Let $\Sigma=\{(x,y,z)\in \R^3| d((x,y,z),
(0,0,0))=R^2\}$, then, using the  explicit formula for the Heisenberg 
geodesics, we have\\ $r=\sqrt{x^2+y^2}=\frac{2}{c} \sin(cR/2)$ and $z=
\frac{cR-\sin (cR)}{2c^2}$, therefore
$$
k_0=\frac{1}{2}\;\frac{c/2}{\sin(cR/2)}\;\frac{\sin(cR)-cR\cos(cR)}{\sin
(cR/2)-(cR/2)\cos(cR/2)}.
$$
The characteristic points are 
$(0,0,\pm \frac{R^2}{4 \pi})$.
%\item [(4)] {\bf Constant mean curvature.} The only known example with constant mean curvature is given by  $\cB(0,R)=\{(x,y,z)\in \dH^1\,|z|<f_R(r)\}$ with  $ f_R(r)=\pm \frac{1}{4} \left(r\sqrt{R^2-r^2}+R^2\arccos{\frac{r}{R}}\right)$.
%Then, at any non characteristic points (i.e. in any point but $(0,0, \pm \frac{R^2}{4})$), the horizontal mean curvature is $2/R$.
\end{description}
\end{ex}
%Regular surfaces
The situation is particularly easy when there are no characteristic points. 
\begin{defi}
We call \emph{regular hypersurface} any $C^1$ hypersurface such that all the points are 
not characteristic.
\end{defi}
In Riemannian geometries any $C^1$ hypersurface is regular,
while in the sub-Riemanian case very few $C^1$ hypersurfaces are.
We quote the  following remark due  by Roberto Monti.
\begin{lemma}
\label{monti}
Any $C^1$ compact surface $\Sigma\subset \dH^1$, topologically 
equivalent to the sphere, is not regular.
\end{lemma}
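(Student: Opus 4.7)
The plan is to derive a topological contradiction by extracting from the sub-Riemannian structure a continuous, nowhere-vanishing line field on $\Sigma$, and then invoking the fact that $S^2$ admits no such field.

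Concretely: at every $p \in \Sigma$ both the horizontal plane $\cH_p = \Span(X(p),Y(p))$ and the Euclidean tangent plane $T_p\Sigma$ are $2$-dimensional subspaces of $T_p\R^3$. By \eqref{characteristSet}, $p$ is characteristic if and only if $\cH_p \subseteq T_p\Sigma$, which by equality of dimensions is the same as $\cH_p = T_p\Sigma$. Suppose, for contradiction, that $\mathrm{char}(\Sigma) = \emptyset$. Then at every $p$ the two planes are distinct, and by inclusion--exclusion in $\R^3$ their intersection satisfies
\[
\dim(\cH_p \cap T_p\Sigma) = 2 + 2 - 3 = 1,
\]
so I can define a line field on $\Sigma$ by $p \mapsto \ell_p := \cH_p \cap T_p\Sigma \subset T_p\Sigma$. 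Because $\Sigma$ is $C^1$ the plane $T_p\Sigma$ depends continuously on $p$ in the Grassmannian $\mathrm{Gr}(2,3)$, $\cH_p$ is smooth in $p$, and the intersection of two transverse $2$-planes in $\R^3$ varies continuously in $\mathrm{Gr}(1,3)$; hence $\ell$ is a continuous, nowhere-vanishing line field on $\Sigma$.

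This contradicts the topology of $\Sigma \cong S^2$. Indeed, any continuous line field on $S^2$ must have singularities: passing to the orientation double cover associated to the line field, which is trivial because $S^2$ is simply connected, produces a continuous nowhere-vanishing vector field on $S^2$ and violates the hairy ball theorem. (Equivalently, a nowhere-vanishing line field would realise $T\Sigma$ as a sum of two real line bundles and force $\chi(\Sigma)=0$, against $\chi(S^2)=2$.) The geometric heart of the proof is thus the observation that \emph{regularity of $\Sigma$ forces $\cH_p$ and $T_p\Sigma$ to be transverse at every point}, which is precisely what produces the line field; the topological input is classical, and the only mildly delicate step is checking continuity of $p \mapsto \ell_p$ from the $C^1$ hypothesis on $\Sigma$, which is routine.
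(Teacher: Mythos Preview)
Your argument is correct and follows essentially the same route as the paper: both proofs observe that regularity of $\Sigma$ produces a nowhere-vanishing horizontal tangent direction at every point, and then invoke the hairy ball theorem on $S^2$. The only difference is cosmetic: the paper obtains a genuine tangent \emph{vector} field in one step by rotating the horizontal normal $n_0$ by $\pi/2$ inside the horizontal plane (this rotated vector lies exactly in your line $\ell_p=\cH_p\cap T_p\Sigma$), whereas you first build the line field $\ell_p$ and then lift it to a vector field via simple connectivity of $S^2$.
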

\begin{proof}
The ``hairy ball theorem'' from algebraic topology states 
that, given a vector field tangent 
to a  surface in $\R^3$, topologically equivalent to the sphere, there 
exists at least  one point where the vector field vanishes.
Let us consider as vector field the horizontal normal vector. 
Assuming that the surface is regular, such a vector 
field is different from zero at any point. Now we can (e.g. using the complex 
interpretation of the Heisenberg group) just rotate such a 
vector by $\frac{\pi}{2}$. This new vector field is still not vanishing at 
any point but it is tangent to the surface, which contradicts the 
topological theorem.
\end{proof}
However it is possible to find some examples of (non-compact) regular surfaces 
among the rotational surfaces $\{|z|=f(\sqrt{x^2+y^2}) 
\,|,x^2+y^2\in [a,b]\}$ with $a,b\in \R$. Whenever $a>0$ (i.e. $r\neq0$ 
in whole the surface), there are no characteristic points.
So rotational surfaces around the $z$-axis are regular, provided they do not intersect the $z$-axis.
That remark leads to the following examples.
\begin{ex}
\label{regularSurface}
Regular surfaces are:
\begin{enumerate}
\item any vertical plane $ax+by=d$,
\item any cylinder around the $z$-axis,
\item any torus around the $z$-axis.
\end{enumerate}
\end{ex}
Let us point out that non-regular surfaces are the far more interesting examples, because all sphere-type surfaces are not regular (Lemma \ref{monti}) and, moreover, the characteristic points are what really makes this geometry so geometrically different from the analogous Euclidean one.
%%%%%%%

%EVOLUTION.
\section{Generalized evolution by horizontal mean curvature.}
\label{generalizedDefinitions}

In Euclidean spaces, the motion by mean curvature flow of a manifold of codimension 1 is the geometrical evolution defined by requiring the normal velocity
at each point of the manifold.
 Only few results are known for mean curvature flow in  sub-Riemannian manifolds, 
 i.e. for the evolution obtained by replacing all the geometrical  objects by
the corresponding horizontal quantities.  
In these degenerate spaces, such a kind of evolution is very different from the corresponding Euclidean motion, especially because of the existence of characteristic points, i.e. points were motion in the (Euclidean) normal direction
is not ``admissible''.\\

%%%
Let us define  rigorously the evolution (or motion)  by mean curvature flow (MCF)
in a sub-Riemannian geometry. We give first a notion  assuming that the hypersurface is {\em regular}  (i.e. smooth without characteristic points)
and then we  derive a weak notion holding for every hypersurface.
\begin{defi}
\label{evDefi1hor}
For $t>0$, let
 $\Gamma (t)$ a family of regular hypersurfaces in a sub-Riemannian geometry $(\R^n, \cH, \big<\cdot,\cdot\big>_g)$.  
We say that  $\Gamma (t)$ is an  evolution by  horizontal mean curvature flow of the hypersurface $\Gamma_0$ if and only if the following holds: $\Gamma(0)=\Gamma_0,$ and  for any smooth horizontal
curve $x(t): [0,T]\to \R^n$  such that
$x(t)\in \Gamma(t)$ for all $t\in [0,T]$, the ``horizontal normal velocity'' is equal to minus the horizontal mean curvature, i.e. 
\begin{equation}
\label{regularMCEhor}
v_0(x(t)):=\left<\dot{x}(t), n_0(x(t))\right>_g=-k_0(x(t))
\end{equation}
where $n_0(x(t))$ and $k_0(x(t))$ are the horizontal external normal and the horizontal curvature of $\Gamma(t)$, calculated at the point $x(t)\in \Gamma(t)$.
 \end{defi}
Note that $\left<\dot{x}(t), n_0(x(t))\right>_g$ is well defined since $x(t)$ is horizontal and smooth and we assume that $\Gamma(t)$ is a regular 
hypersurface.\\
  \eqref{regularMCEhor} is not sufficient to describe the evolution since, like in the Euclidean case,
it is not defined whenever the hypersurface develops singularities (which can happen in the Euclidean case starting from a smooth hypersurface) 
and it is not defined at the characteristic points, which are a specific feature of the sub-Riemannian MCF.
\\
We introduce a weak notion of evolution by mean curvature flow, using the level set approach. Such a definition was given first by   Chen, Giga and Goto  \cite{Giga}  and, independently, by Evans and Spruck \cite{EvSp:91}. It is based on the idea of defining the evolution of a function $u(t,x)$ by a degenerate parabolic PDE in such a way that each level set $\{x\in \R^n:\ u(t,x)=c\}$ evolves by mean curvature as long as it is a smooth manifold, see e.g. \cite{EvSp:91}. Exploiting the fact that this PDE is degenerate parabolic, one can define  a generalized solution, called viscosity solution. \\

Next we derive  this degenerate PDE for regular hypersurfaces.\\
Let $\Gamma(t)=\{u(t,x)=c\}$, then the horizontal normal of $\Gamma(t)$ at $x(t)$ is  given  by 
$n_0(x(t))=\Upsilon u/|\Upsilon u|$. Since $x(t)$ is horizontal and smooth,
$$
\dot{x}(t)=\sum_{i=1}^m \alpha_i(t)X_i(x(t))=\sigma^T(x(t)) \boldsymbol{\alpha}(t), \quad \textrm{for every}\;\; t\in [0,T].
$$
Set $\boldsymbol{\alpha}(t)=(\alpha_1(t),...,\alpha_m(t))^T$ and  
recalling that  $\left<\dot{x}(t), \frac{\Upsilon u}{|\Upsilon u|}\right>_g=\left<\boldsymbol{\alpha}(t), \frac{\cX u}{|\cX u|}\right>_m$,  the horizontal normal velocity can be written as
\begin{equation*}
\left<\dot{x}(t), n_0(x(t))\right>_g
=\left<\boldsymbol{\alpha}(t), \frac{\cX u}{|\cX u|}\right>_m
=\!\left<\boldsymbol{\alpha}(t), \frac{\sigma(x) Du}{|\sigma(x) Du|}\right>_m
=|\cX u|^{-1}\!\left<\dot{x}(t), D u\right>_n
\end{equation*}
with $\left<\cdot,\cdot\right>_m$ and $\left<\cdot,\cdot\right>_n$   denoting the inner product in $\R^m$ and $\R^n$  respectively.\\
From now to on, we can procede similarly to the Euclidean case. In fact, $x(t)\in \Gamma(t)$ if and only if $u(t,x(t))=c$. Taking
the derivative in time  and using \eqref{regularMCEhor}, yields
\begin{equation}
\label{pizza1}
u_t(t,x(t))=-\left<\dot{x}(t), D u(t,x(t))\right>_n=-|\cX u|\left<\dot{x}(t), n_0(x(t)\right>_g=|\cX u|k_0(x(t))
\end{equation}
  It remains to use $k_0(x(t))=\sum_{i=1}^m X_i\left( \frac{ (\cX  u)_i}{|\cX u|}\right)
$, which gives 
\begin{equation}
\label{levelSetMCE}
u_t=\tr \left((\cX^2 u)^*\right)-\bigg<(\cX^2 u)^*\frac{\cX u}{|\cX u|},\frac{\cX u}{|\cX u|}\bigg>=\Delta_0 u-\Delta_{0,\infty} u.
\end{equation}
We want to point out that  equation  \eqref{levelSetMCE} is parabolic  degenerate whenever  $\cX u=\sigma(x) D u=0$. 
  We call  the points where the horizontal gradient vanishes \emph{singularities}.
In the Euclidean case it is known that 
singularities can lead to the so-called fattening of level sets. 
We say fattening  occurs when the level set has no-empty interior, that means in particular that the gradient vanishes in an open subset, i.e. the co-dimension of
the level set is locally zero
(see \cite{{AAG95},{BePa:95},{Gigabook}}, for more information).  
In the sub-Riemannian geometry, singularities are related to the vanishing of the horizontal gradient, which happens at characteristic points. \\
Note that the co-dimension of the horizontal tangent space is not zero at a characteristic point.
Therefore, singularities of the level set equation in sub-Riemannian geometries describe very different geometrical phenomena in spite of the formal analogy with the Euclidean case.\\ 
In order to introduce a generalized  motion by horizontal mean curvature,
we follow the  definition introduced by Chen, Giga and Goto in  \cite{Giga} for the Euclidean evolution and by Giga in \cite{Gigabook} for generic degenerate parabolic equations. \\

Recall that the structure of \eqref{levelSetMCE} is that of a degenerate parabolic equation 
\begin{equation}
\label{mariapia}
u_t+F(x,Du,D^2 u)=0
\end{equation}
with 
$$
F(x,p,S)= -\tr\big( \sigma(x) S\sigma^T (x)+A(x,p) \big)+\left< \big( \sigma(x) S\sigma^T (x)+A(x,p) \big)\frac{\sigma(x)p}{|\sigma(x)p|},\frac{\sigma(x)p}{|\sigma(x)p|}\right>
$$
with $A(x,p)$ defined in  \eqref{matricial}. For sake of semplicity, set
$$
\widetilde{S}=\sigma(x) S \sigma^T (x)+A(x,p),
$$
  then we can easily calculate that the upper and  lower  semicontinuous envelopes of  equation \eqref{mariapia} are
$$
\left\{
\begin{aligned}
-\tr\big(\widetilde{S}\big)+\left< \widetilde{S}\frac{\sigma(x)p}{|\sigma(x)p|}, \frac{\sigma(x)p}{|\sigma(x)p|}\right>, \quad &|\sigma(x)p|\neq 0
\\
-tr\big(\widetilde{S}\big)+\lambda_{max}\big( \widetilde{S} \big), \quad &|\sigma(x)p|=0
\end{aligned}
\right.
$$
and
$$
\left\{
\begin{aligned}
-\tr\big(\widetilde{S}\big)+\left< \widetilde{S}\frac{\sigma(x)p}{|\sigma(x)p|}, \frac{\sigma(x)p}{|\sigma(x)p|}\right>, \quad &|\sigma(x)p|\neq 0
\\
-tr\big(\widetilde{S}\big)+\lambda_{min}\big( \widetilde{S} \big), \quad &|\sigma(x)p|=0
\end{aligned}
\right.
$$
where $\lambda_{\max}(S)$ and $\lambda_{\min}(S)$ are the  maximal and minimal eigenvalues of $S$.\\

Taking $\widetilde{S}=(\cX^2 u)^*$ and $\sigma(x)p= \cX u$, we 
can give the following definition for the generalized motion by horizontal mean curvature flow.
\begin{defi}
\label{giga}
Let $\Gamma_0=\{x\in \R^n|u_0(x)=0\}$ hypersurface in $\R^n$. We say that $\Gamma(t)=\{x\in \R^n|u(t,x)=0\}$  is \emph{generalized evolution by horizontal mean curvature flow} if $u$ satisfies the initial condition $u(0,x)=u_0(x)$ and it is a  viscosity solution of  \eqref{levelSetMCE}   in the sense of \cite{Gigabook}, that means $u$ is a continuous function and
\begin{enumerate}
\item  for any 
 $\varphi\in \C^2(\R^n\times (0,+\infty))$ such that $u-\varphi$ has a local minimum  at $(t_0,x_0)$, then
 \begin{equation}
 \label{levelSetMCEsuper}
 \left\{
\begin{aligned}
\varphi_t-\Delta_0 \varphi+\Delta_{0,\infty}  \varphi \geq 0, \quad &\textrm{at} \,(t_0,x_0),\, \textrm{if}\; \;\cX \varphi(t_0,x_0)\neq 0\\
\varphi_t-\Delta_0 \varphi+\lambda_{\max}( (\cX^2 \varphi)^*)\geq 0,\quad &\textrm{at} \,(t_0,x_0),\, \textrm{if}\; \;\cX \varphi(t_0,x_0)=0
\end{aligned}
\right.
\end{equation}
\item for any 
 $\varphi\in \C^2(\R^n\times (0,+\infty))$ such that $u-\varphi$ has a local maximum  at $(t_0,x_0)$, then
 \begin{equation}
 \label{levelSetMCEsub}
 \left\{
\begin{aligned}
\varphi_t-\Delta_0 \varphi+\Delta_{0,\infty}  \varphi \leq 0, \quad &\textrm{at} \,(t_0,x_0),\, \textrm{if}\; \;\cX \varphi(t_0,x_0)\neq 0\\
\varphi_t-\Delta_0 \varphi+\lambda_{\min}( (\cX^2 \varphi)^*) \leq 0,\quad &\textrm{at} \,(t_0,x_0),\, \textrm{if}\; \;\cX \varphi(t_0,x_0)=0
\end{aligned}
\right.
\end{equation}
\end{enumerate}
\end{defi}
As we will see later, we can give the same definition for discontinuous functions, requiring the subsolution condition  
(Def. \ref{giga}, (2)) 
for the upper semicontinuous envelope of $u$ and, the supersolution condition  
(Def. \ref{giga}, (1)) 
for the lower semicontinuous envelope of $u$.\\

We would like to point out that  the level set approach gives a well-posed notion of evolution, provided that  the set $\Gamma(t)$ does not depend on the chosen parametrization $u_0$ but just on the level set $\Gamma_0$. This
is the case if  whenever $$
U^+_0:=\{x\in \R^n\,|\,u_0(x)\le 0\}\subset \{x\in \R^n\,|\,v_0(x)\le 0\}=:V^+_0,$$ indicating by   $u(t,x)$  and $v(t,x)$, respectively, the viscosity solutions of equation  \eqref{levelSetMCE} with initial conditions $u_0$ and $v_0$,  then  $$\{x\in \R^n\,|\,u(t,x)\le 0\}\subset \{x\in \R^n\,|\,v(t,x)\le 0\}$$ (similarly for the nonnegative level sets).
%%%%%
In the Euclidean case, it is possible to prove this,  using comparison principles  for the level set equation (and  a suitable reparametrization for the initial data   such that we obtain  $u_0\ge v_0$
see \cite{Gigabook}, Theorem 4.2.8. for a complete proof).\\
The proof holds for equations with $F$ strong geometric, provided a comparison principle for viscosity solutions holds. 
In our case $F$  is strong geometric, 
hence the well-posedness of Definition \ref{giga} depends  mainly on the existence of comparison principles. Unfortunately, very little is known about  comparison principles (and hence uniqueness in the case of the evolution by horizontal mean curvature. In \cite{capognaCitti}, Capogna and Citti prove comparison principles 
in Carnot groups for special classes of initial data:
\begin{teo}[\cite{capognaCitti}, Theorems 3.1]
\label{unicita}
Let $G$ be a Carnot group and $u$ and $v$ bounded viscosity subsolution and supersolution of \eqref{levelSetMCE}, respectively, with initial datum $u_0$ and $v_0$. Suppose 
$u_0$ or $v_0$ are uniformly continuous and for any $(x_H,x_V),(x_H,y_V)\in \G=\oplus_{i=1}^k V_i$, where we indicate by $z_H\in V_1$ the horizontal part of a point and by $z_V$  the vertical part (i.e. $z_V\in \oplus _{i=2}^k V_i$), then $u_0(x_H,x_V)\leq v_0(x_H,y_H)$.
Then $u(t,x)\leq v(t,x)$, for any $t\geq 0$ and $x\in \G$.
\end{teo}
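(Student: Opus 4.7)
The plan is to follow the standard doubling of variables strategy for viscosity comparison, adapted to the Carnot group structure. The available tools are: the translation invariance of equation~\eqref{levelSetMCE} (the vector fields $X_i$ are left invariant on $\G$), the smoothness of the power $\norma{y^{-1}\cdot x}_0^{2r!}$ of the homogeneous gauge (see Section~2.1), and the strong geometric form of $F$ from \eqref{mariapia}. Arguing by contradiction, suppose $\sup_{[0,T]\times\G}(u-v)>0$ for some $T>0$, and introduce the test function
\[
\Phi_{\varepsilon,\eta,\beta}(t,x,y) := u(t,x) - v(t,y) - \frac{1}{\varepsilon}\norma{y^{-1}\cdot x}_0^{2r!} - \eta t - \beta\bigl(\norma{x}_0^{2r!}+\norma{y}_0^{2r!}\bigr),
\]
which attains a maximum at some $(\hat t, \hat x, \hat y)$ by boundedness of $u,v$ together with coercivity of the gauge.

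The first step is to show that the hypothesis on the initial data rules out $\hat t = 0$ in the limit $\varepsilon \to 0$. The penalty forces $\norma{\hat y^{-1}\cdot \hat x}_0 \to 0$, which by the anisotropy of the dilations implies $\hat x_H \to \hat y_H$ but leaves the vertical components $\hat x_V, \hat y_V$ only loosely controlled. The Capogna--Citti assumption $u_0(x_H,x_V) \le v_0(x_H, y_V)$ for \emph{all} pairs $(x_V,y_V)$ is precisely what is needed to defeat this vertical slack, guaranteeing $\limsup_{\varepsilon\to 0}\max \Phi_{\varepsilon,\eta,\beta}\big|_{t=0} \le 0$, while the positivity of $\sup(u-v)$ (inherited, up to small errors in $\eta,\beta$) keeps $\hat t>0$. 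Uniform continuity of one of the data is used here to pass to the limit in the initial penalty.

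The heart of the proof is then the Crandall--Ishii theorem of sums at $(\hat t, \hat x, \hat y)$, which produces parabolic semijets $(a, p_x, X)$ and $(a, p_y, Y)$ for $u$ and $v$ with $X \le Y$ in the standard weak sense, and with $p_x, p_y$ determined up to a small perturbation by the horizontal derivatives of the homogeneous gauge. Subtracting the sub- and supersolution inequalities (taking the appropriate envelopes from Definition \ref{giga}) should yield a bound of the form $2\eta \le F_*(\hat y, p_y, Y) - F^*(\hat x, p_x, X) + o_\beta(1)$. The main obstacle is the degeneracy at \emph{characteristic points}: if $\sigma(\hat x)p_x = 0$ or $\sigma(\hat y)p_y = 0$ one is forced to compare terms involving $\lambda_{\max}((\cX^2\varphi)^*)$ with terms involving $\lambda_{\min}$ or the weighted $\Delta_{0,\infty}$ operator, a mismatch which is typically delicate in the sub-Riemannian setting. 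Using that $\sigma(x)$ varies smoothly, that $F$ is strong geometric, and that the matrix inequality $\sigma(x)X\sigma(x)^T \le \sigma(y)Y\sigma(y)^T + o(1)$ holds at the concentration point, one absorbs these degenerate eigenvalue contributions; sending $\beta \to 0$ then $\eta \to 0$ gives the contradiction $2\eta\le 0$ and completes the argument.
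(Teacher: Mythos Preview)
This theorem is not proved in the present paper; it is quoted from Capogna--Citti \cite{capognaCitti}, and the paper only records the key idea of their argument in the paragraph immediately following the statement. That paragraph is nevertheless enough to identify a genuine gap in your sketch.

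You use the structural hypothesis $u_0(x_H,x_V)\le v_0(x_H,y_V)$ only once, to rule out $\hat t=0$. For the interior step you then confront the degenerate case $\sigma(\hat x)p_x=0$ head on and write that, using smoothness of $\sigma$, the strong-geometric property of $F$, and the matrix inequality from the theorem on sums, ``one absorbs these degenerate eigenvalue contributions.'' This is exactly the step that is \emph{not} known in general: as the paper stresses, the singular set $\{(x,p):\sigma(x)p=0\}$ has positive dimension in $p$ and depends on $x$, and the usual Euclidean tricks for handling the isolated singularity $p=0$ do not close the $\lambda_{\max}$ vs.\ $\lambda_{\min}$ gap here. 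No amount of smoothness of $\sigma$ or the Crandall--Ishii matrix inequality alone produces that absorption; if it did, comparison would hold for arbitrary initial data, which the paper explicitly says is open.

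According to the paper, the Capogna--Citti argument uses the hypothesis in a much stronger way: the special ordering of the data in the vertical variables is what allows them to choose the doubling penalty so that the \emph{horizontal gradient of the test function does not vanish at the maximum point}. In other words, the hypothesis is not merely a boundary-layer device at $t=0$; it is what lets one stay away from the characteristic set throughout the interior argument and thereby run the comparison in the non-degenerate regime, ``in the spirit of the Euclidean proof.'' Your proposal should be revised to reflect this: design the penalization (or the auxiliary comparison) so that the vertical freedom in the hypothesis forces $\sigma(\hat x)D_x\varphi\neq 0$, and then the subtraction of sub/supersolution inequalities proceeds without ever touching the $\lambda_{\max}/\lambda_{\min}$ envelopes.
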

The assumption  on the initial datum is used to avoid the problems created by the characteristic points.
They can  circumvent the degeneracy of the equation in the spirit of the Euclidean proof, using the method  of doubling variables and showing that the horizontal gradient of the chosen test function does not vanish at the maximum point. 
Spheres, tori and any compact surfaces (see Example \ref{variuosEx} for some of those) are not covered by the result of \cite{capognaCitti}.
\\
Let us point out that the definition introduced in \cite{capognaCitti} looks slightly different from ours.  
Indeed Capogna and Citti follow the  definition of viscosity solutions for degenerate parabolic equations used by Evans and Spruck in \cite{EvSp:91},  that is  a more general notion of solution. So comparisons for the Capogna-Citti's definition imply comprisons for our solutions.
The two definitions are equivalent in the Euclidean case (see  \cite{Gigabook}),   while this equivalence is not clear in the sub-Riemannian case. We would like to remark that the results proved for general nonlinear degenerate parabolic equations in  \cite{Gigabook} (like  equivalence of the definitions, comparison principles, existence, etc.) rely on techniques which are not applicable  in our case.
The main difference between the usual degenerate parabolic equations and the level set equation for the evolution by horizontal mean curvature flow 
is that equation \eqref{levelSetMCE} is discontinuous at the points 
$(p,x)\in \R^n\times \R^n$ such that $\sigma(x)p=0,$ which is a space-variable-depending set which has  {\em non-zero dimension} in $p$. \\
%%%%%%
%%%%%%

To conclude this section we are going to have a closer look at the case of the Heisenberg group.
Since $(\cX^2 u)^*=\sigma(x)(D^2 u)\sigma^T(x)$ is a symmetric $2\times 2$ matrix, there are exactly two eigenvalues. 
As the trace is the sum of the eigenvalues, we can  rewrite  Definition \ref{giga} at singular points as
$$
\varphi_t-\Delta_0 \varphi+\lambda_{\max}( (\cX^2 \varphi)^*)=\varphi_t-\lambda_{\min}( (\cX^2 \varphi)^*)\geq 0
$$
and
$$
\varphi_t-\Delta_0 \varphi+\lambda_{\min}( (\cX^2 \varphi)^*)=\varphi_t-\lambda_{\max}( (\cX^2 \varphi)^*)\leq 0
$$
   
In the particular case of  rotational surfaces around the $z$-axis in the Heisenberg group, it is easy to see that the level set equation  is continuous up to the set of characteristic points.
Let $\Gamma_0=\{(x,y,z)\in \dH^1\,|\, |z|
=f(r)\}$, i.e. $u(t,(x,y,z))=|z|-f(t,r)$ with $r=\sqrt{x^2+y^2}$. 
The level set equation at the non characteristic points with $0<r\ll1,$ is equal to
\begin{equation}
\label{levelsetRot}
f_t=
\frac{4 (f'(r))^3+r^3f''(r)}{4r(f'(r))^2+ r^3}
\end{equation}
We already remarked that the characteristic points correspond to $r=0$ with vanishing Neuman condition $f'(0)=0$. 
So  by the first-order Taylor expansion for the function $f'(r)$ in $0$ (that is
$
f'(r)=f'(0)+f''(0)r+o(r)=f''(0)r
$),
we can deduce
$$
\lim_{r\to 0} \frac{f'(r)}{r}=f''(0)
$$
Hence, at the characteristic points,  the matrix $(\cX^2 u)^*(r=0)$ has an eigenvalue equal to $f''(0)$ with multiplicity 2. So in such a case
the level set equation is continuous and we obtain  $f_t -f''(0)=0$, whenever $r=0$.   In particular, there is a rich class
of examples for which the velocity in the characteristic point is non-zero.
%%%%
%%%%STOCHASTIC PART:
%TITLE
\section{Controlled diffusion processes.}
% ND: Important: I corrected the title
%Notation changed in  a more analytic way, please kep this notation

Let us first recall some elementary facts from stochastic analysis for continuous semi-martingales which can be found in any standard textbook such as e.g. \cite{MR1121940}. Given a probability space $(\Omega,{\mathcal F}, \dP) $ together with a  filtration  
$ \{{\mathcal F}t\}_{t\geq 0}$ let
$\xi(t)$ be continuous and adapted (i.e. $\xi(t)$ is ${\mathcal F}_t$-measurable), and let $B(t)$ be a Brownian motion adapted to the filtration.
Then 
%$\xi(t),\eta(t)$, %
{%\clm fixed  a partition $0=t_1<...<t_N=t$, 
the It\^o integral  $\xi dB(t)$is defined as the following  limit (as the step size of the partition decreases) in  $L^2(\Omega):$
%the limit of the discrete integral 
$$
%(\xi d\eta)(t) =
 \int_0^t \xi(s)dB(s)\;\;\large{:=\!\!\!\!\!^{^{L^2}}} \lim_{N \to +\infty} \sum_{i=1}^N \xi(t_i)\big(B(t_{i+1})- B(t_{i})\big).
 $$Note that this holds actually in a far more general setting: The convergence holds 
 in the space of continuous square integrable martingales, the deterministic
 partition may be replaced by one constructed via stopping times, the  integrand $\xi$ need not be continuous, but merely previsible,
 and the Brownian motion as integrator can be replaced by any square-integrable continuous (semi-) martingale $\eta(t).$ In latter, more
 general case, we write $(\xi d\eta)(t)$ for the It\^o-integral. 
 
The Stratonovich integral $\xi\circ d\eta$ is defined as
 $$
% (\xi\circ d\eta)(t)   
 \int_0^t \xi(s)\circ d\eta(s) \;\;\large{:=\!\!\!\!\!^{^{L^2}}} \lim_{N \to +\infty} \sum_{i=1}^N \frac{\xi(t_i)+\xi(t_{i+1})}{2}\big(\eta(t_{i+1})- \eta(t_{i})\big),
 $$
  %%%%
both integrals are related by the formula
$$
\xi\circ d\eta = 
\xi d\eta + \frac 1 2 d\langle \xi,\eta\rangle 
$$
where
$\langle \xi,\eta\rangle $ denotes the quadratic covariation of the processes $\xi$ and $\eta$ which is defined as
%%???
%$$
 %\langle \xi\circ d\eta\rangle (t) ?=? \int_0^t \xi(s)d\eta(s) \;\;\large{=\!\!\!\!\!^{^{L^2}}}  \lim_{N\to +\infty} \sum_{i=1}^N \big(\xi(t_{i+1})- \xi(t_{i})\big) \big(\eta(t_{i+1})- \eta(t_{i})\big)
% $$
 $$
  \int_0^t d \langle \xi, \eta\rangle (s) \;\;\large{=\!\!\!\!\!^{^{L^2}}}  \lim_{N\to +\infty} \sum_{i=1}^N \big(\xi(t_{i+1})- \xi(t_{i})\big) \big(\eta(t_{i+1})- \eta(t_{i})\big)
 $$ 
 %%
 %We would like to point out that the integrators are in general not of bounded variation, i.e. the Riemann-Stieltjes
 %integral is not defined.
The chain rule  looks classical if we use the Stratonovich integral. In fact, for any smooth $f$, the process $f(\xi(t))$ 
%ND: I would like not to talk too much about semi-martingales
% again a semi-martingale with 
satisfies
$$
 d\big[f(\xi(t))\big]  = f'(\xi(t)) \circ d\xi,
 $$ which can be re-written as
 %{\clbl Previous  relation  can be rewritten as
 \begin{align*} 
 d\big[f(\xi(t))\big] 
% & = f'(\xi(t)) d\xi + \frac 1 2 d\langle f'(\xi(t)),\xi\rangle \\
%& = f'(\xi(t)) d\xi + \frac 1 2 d\langle  f''(\xi(t))\circ d\xi,\xi\rangle \\
& = f'(\xi(t)) d\xi + \frac 1 2 f''(\xi(t))d \langle \xi,\xi\rangle.
\end{align*} 
%where, in the last step above, we have used that the $f''(\xi(t))\circ d\xi$ and $f''(\xi(t)) d\xi$ differ only by the process $\frac 1 2  \langle f''(\xi(t)),\xi\rangle $% which has vanishing quadratic variation  and also that 
%{\clb   $d\langle \eta \circ d\xi,\xi\rangle =\eta d\langle \xi,\xi\rangle$, for any process $\eta$   [F: is now this formula correct?]}}.\\
Note that, whenever $\xi=B$ is a Brownian motion, we have $d\langle \xi,\xi\rangle=d\langle B,B\rangle=dt$ and the formula above is the well known It\^o formula. This establishes the basic connection between second order PDE and stochastic processes which yields an extension of the classical method of characteristics to the case of second order equations.\\ 
We would like to  point out that
we will use the Stratonovich calculus for defining our controlled stochastic processes since, because the chain rule is the classical one, it does not 
depend on the chosen parametrization and so it is intrinsic in   Riemannian and sub-Riemannian geometries (see e.g. \cite{Hsu}). Nevertheless the It\^o calculus will be very useful for proofs and computations (see Sec. 5).\\

\subsection{The stochastic control problem.}$\quad$\\

It is well known that viscosity solutions of certain second-order equations are closely related to the value function of
stochastic control problems, see e.g. \cite{sonerbook}.  The relation between solutions of degenerate equations like in Definition
\ref{giga} and stochastic control problems is more complicated. Nevertheless,
Soner and Touzi (in \cite{{soner3},{soner1}}) and, using another approach, Buckdahn,  Cardaliaguet and Quincampoix 
(in \cite{buckdahn}) derived a stochastic representation for a set evolving by mean curvature flow (in the Euclidean case).
The following stochastic optimal control problem (\ref{reachSet}) has very much in common with its Euclidean predecessors \cite{soner3,soner1,buckdahn}, where now we have to replace the It\^o by a Stratonovich SDE which reflects the fact that  we do not work  in an Euclidean space.
%{\clb [instead of  ``we work in a non-flat geometry'']}.  
\\ 

%{\clb [Max:] Let $B\in \R^m$ be a  Brownian motion on some probability space $(\Omega, P)$ which we equip with the filtration $\mathcal F=(\mathcal F_t = %\sigma(B_\sigma, \sigma \leq t))_{t\geq 0}$   generated by $B$. } 
%%%change
%{\clb [Fede: I would like to change it so:]}
  Let  $(\Omega, \mathcal F, \{\mathcal F_t\}_{t\geq 0}, \dP)$ be a filtered probability space and 
$B$ is a $m$-dimensional Browinian motion adapted to the filtration $\{\mathcal F_t \}_{t\geq 0}$.
We define the set of admissible controls by 
\begin{equation*}
\label{ControlSet}
\cV=\{ (\cv(s))_{s\geq 0} \mbox{ predictable } | \,\cv(s) \in S_m,\cv\geq 0, I_m-\cv^2\geq 0, \tr(I_m-\cv^2)=1\}.
\end{equation*}
 Under suitable assumptions, each $\cv(s)$  determines a  (unique)control path  $\xi^{t,x,\cv(\cdot)}$ as a solution to the SDE
\begin{equation}
\label{SDEs-Srat}
\left\{
\begin{aligned}
&d\xi^{t,x,\cv(\cdot)}(s)=\sqrt{2}\sigma^T(\xi^{t,x,\cv(\cdot)}(s))\circ  d B^{\cv}(s), \quad s\in (t,T]\\
&d  B^{\cv}(s) =  {\cv}(s)dB(s), \quad\; \hspace{3cm} s\in (t,T]\\
&\xi^{t,x,\cv(\cdot)}(t)=x
\end{aligned}
\right.
\end{equation}
 where $\circ dB^{\cv}$ denotes the integral w.r.t to $B^{\cv}$ in the sense of Stratonovich.
  Using the relation $\xi\circ d\eta = \xi d\eta + \frac 1 2 \langle \xi, \eta \rangle$ between  the Stratonovich and the It\^o formulation, we get the following equivalent It\^o formulation for SDE \eqref{SDEs-Srat} 
\begin{equation}
\label{SDEs-Ito}
\left\{
\begin{aligned}
d\xi^{t,x,\cv(\cdot)}(s)&=\sqrt{2}\sigma^T(\xi^{t,x,\cv(\cdot)}(s)) \cv(s) d B(s)\\
& \phantom{=} ~~~~~   + \sum_{i,j=1}^m (\cv^2(s))_{ij}\nabla_{X_i}X_j (\xi^{t,x,\cv(\cdot)}(s)) ds, \quad s\in (t,T]\\
\xi^{t,x,\cv(\cdot)}(t)&=x
\end{aligned}
\right.
\end{equation} where $\nabla_{X_i}X_j = DX_j \cdot X_i$ is the (Euclidean)  derivative of the vector field $X_j$ in direction $X_i$.  
A straightforward application of  It\^o's formula  gives for smooth bounded $u: \R^n \to \R$ that 
\begin{align}
 du(\xi^{t,x,\cv}(s)) & = \sqrt 2 \sum _{i=1}^m X_i(u)(\xi^{t,x,\cv}(s))\cv(s) dB(s)\nonumber \\
& \phantom{~~} + \sum_{{i,j=1}}^{m} (\cv^2(s))_{ij}  \left[\sum_{k,l=1}^n u_{kl}   X_i^k  X_j ^l  + 
\sum_{k=1}^n u_{k}   \nabla_{X_i}X_j^k\right]\left(\xi^{t,x,\cv}(s)\right)  ds,\nonumber 
\intertext{where we used the notation $X_i= (X_i^1, \dots,X_i^n) \in \R^n$, $u_k=  \frac {\partial  u}{\partial x_k}$ and  $u_{kl} = \frac {\partial ^2 u}{\partial x_k \partial x_l}$, so that the previous identity can be written as}
 du(\xi^{t,x,\cv}(s)) 
 &= \sqrt 2 \sum _{i=1}^m X_i(u)(\xi^{t,x,\cv}(s))\cv(s) dB(s) +  \mathop{\normalfont tr}\left[ (\cv(s))^2   (\cX^2u)\right]( \xi^{t,x,\cv}(s) )  ds\nonumber  
 %\\
 %& b = \sqrt 2  (\cX u)^T (\xi^{t,x,\cv}(s))\cv(s) dB(s) +  H^{(\cv(s))^2}u( \xi^{t,x,\cv}(s) ) ds}
 \label{generatorformula} 
 \end{align}
%where for $\tau \in S_m$ and $u : R^n \to \R$ we define 
% \[ H^{\tau}u(x) =\mathop{\normalfont tr}\left[ \tau \cdot \big(\cX^2u\big)^*\right](x). \]
%%%

%NEW PART
From now on, we assume that the matrix $\sigma(x)$ as well as the drift  $$\mu(x):= \sum_{i,j=1}^m  \nabla_{X_i}X_j (x)$$ are Lipschitz in $x$. Under the Lipschitz condition, classical results for stochastic ODEs give that for any fixed control $\cv$, 
 \eqref{SDEs-Srat} has a unique strong  (see e.g. \cite{Yong}, Chapter 1, Corollary 6.1). 
Recall that  the main difference between the notions of strong and weak solutions is that, in the first case, the filtered probability space  $(\Omega, \mathcal F, \{\mathcal F_t\}_{t\geq 0}, \dP)$  and the Brownian motion $B$ are fixed while a weak solutions mean that there exists a
process on {\em some}  filtered probability space equipped with an adapted Brownian motion which satisfies the equation,
for more details see Definitions 6.2 and 6.5, \cite{Yong}. 
This difference becomes very important for the stochastic control problem, i.e. when considering
\begin{equation}
\label{carboni}
 \inf_{\cv}{ \mathbb E}[f\big(\xi^{t,x,\cv}(T)\big)],
\end{equation}
  where usually $f$ is a suitably regular teminal cost  function and $\xi^{t,x,\cv}(\cdot)$ are solutions of a controlled  It\^o SDE as e.g.  \eqref{SDEs-Ito}.
  It is clear that the properties of the previous minimum problem depend on the set where we take the infimum.\
 Hence we define the set $\cA$ of all the weak-admissible controlled pairs (\cite{Yong}, Definition 4.2) which are, roughly speaking, 6-tuple $\pi=(\Omega, \mathcal F, \{\mathcal F_t\}_{t\geq 0}, \dP, B(\cdot), \cv(\cdot))$
  such that $( \xi^{t,x,\cv}(\cdot), (\Omega, \mathcal F, \{\mathcal F_t\}_{t\geq 0}, \dP)) $ is a weak solution of the controlled SDE  \eqref{SDEs-Srat}, 
 w.r.t.  the control $\cv$ and the Brownian motion $B$ in the filtered probability space $(\Omega, \mathcal F, \{\mathcal F_t\}_{t\geq 0})$. 
%This weak formulation is very usefull especially whenever one needs to use the Dymanical Programming Principle (that we will inderectly use in order to %study the approximating problem for the approximating functions in the proof of the main theorem).
Under certain structural assumptions for the control set and assuming sufficient regularity of the coefficients, 
the existence of an optimal control for a large class of problems as in \eqref{carboni} is known,
(see for example Theorem 5.3 in \cite{Yong}). For these results it is crucial to use the weak formulation.\\

%%%%%%%%%
 Following \cite{soner3,soner1,buckdahn}, for a given bounded and uniformly continuous function $g:\R^n\to \R$, we define   the value function associated to the stochastic control probems \eqref{SDEs-Srat}, as
 \begin{equation}
\label{reachSet}
V(t,x)=\inf_{\cv \in \cA}\ess\sup_{ \omega\in\Omega}g(\xi^{t,x,\cv(\cdot)}(T)(\omega))
\end{equation}
 where the set $\cA$ is the set of the weak-admissible controlled pairs, defined above.\\
 From now to on we will often omit the dependency on $\omega$.\\
 
 In the Euclidean case (i.e. $\sigma(x)=Id$) the value function \eqref{reachSet}  is  the solution  of the level set equation for the evolution by mean curvature flow (backward)  in the viscosity sense (cf.  \cite{buckdahn}, Theorem 1.1),  
\begin{equation}
\label{BackMChor}
\left\{
\begin{aligned}
-V_t=\Delta V-\Delta_{\infty} V, \quad &x\times \R^n,\,t\in [0,T)\\
V(T,x)=g(x),\quad &x\in \R^n.
\end{aligned}
\right.
\end{equation}
Our goal is to show that this result is still true in the general sub-Riemannian case.\\
 We conclude this section which was devoted to stochastic control problems  with a remark. 
 
 \begin{rem} [Non-Lipschitz coefficients]
 If the coefficients of the matrix $\sigma(x)$ and of the drift part $\mu(x)$ are smooth but not globally Lipschitz,
 the solutions of the SDE could explode in finite time. 
 There are results on non-explosion for some classes of non-Lipschitz coefficients, but we will not investigate this issue further,
 but instead assume global in time existence of solutions for the controlled SDEs. In many important
 examples, e.g. in the Heisenberg group, the Lipschitz condition is satisfied and so the non-explosion follows.
 %NEW
 In particular, in this case the drift part is zero, so the Stratonovich and the It\^o formulations coincide.
 % [ADD BIBLIO: Liapanov function, the log growth, Holder in 1-dim case, some general books and maybe one in the Riemannian case.]NO for the preprint we leave it as it is
  \end{rem}

%[Fede: I do not understand the following Max's remark] ND: Let's take it out for now]
%\begin{rem}
%The control SDE (\ref{SDEs-Srat}) is a straightforward generalization of the Euclidean control SDE considered in \cite{soner3,soner1,buckdahn} describing %a locally constrained codimension 1  (horizontal) Brownian motion. To see this,   let us consider for simplicity the case of an $n$-dimensional Lie group $G$ %equipped with a left invariant metric where the Laplace-Beltrami operator on smooth functions on $G$ takes the simple form $\Delta f = \sum_{i=1}^n X_i^2$ %with $X_i$ being left invariant vector fields generated from  an orthonormal basis in $\mathfrak g = T_eG$. Defining a ``horizontal'' subspace $H_eG \subset %_eG$ and choosing w.l.o.g {\clb [Fede:???]}  $X_1, \dots, X_m$ an {\clb o.n. ???=orthonormal basis??} of $H_eG$ gives rise to a horizontal distribution $H$ %on $G$ and the associated horizontal Laplacian $\Delta_0 f = \sum_{i=1}^m X_i^2$. 
%{\clm (The additional requirement corresponding to the Carnot group case that $\mathfrak g$ be generated by the linear subspace $H_e$ is inessential for %the constructions below.)}
%\end{rem}
%%%
%%%%%%%
 %%%%%%%%%%%%
\subsection{Properties of the value function.}$\quad$\\

{In this section we study the main properties of the value function as defined by \eqref{reachSet}.}
\begin{lemma}[Comparison Principle]\label{comstoch}
Let $g_1, g_2$ be bounded and uniformly continuous functions with $g_1\le g_2$ on $[0,T]\times \R^n,$ and let 
$V_i,$ $i\in \{1,2\},$  be defined as in (\ref{reachSet}) with $g_i $ as terminal cost. Then
$$
V_1(x,t)\le V_2(x,t), \quad \textrm{on}\;  [0,T]\times \R^n
$$
\end{lemma}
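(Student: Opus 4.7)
The claim is essentially a monotonicity statement for the value function with respect to the terminal cost, and the plan is to prove it by chasing the inequality $g_1 \le g_2$ through each operation in the definition \eqref{reachSet}. Note that both value functions are defined using the \emph{same} admissible set $\cA$ of weak-admissible control pairs and the \emph{same} controlled SDE \eqref{SDEs-Srat}; only the terminal cost changes. This means there is no coupling between two different processes to set up — the two candidate ``trajectories'' to be compared are literally the same controlled path $\xi^{t,x,\cv(\cdot)}$.

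First I would fix an arbitrary weak-admissible controlled pair $\pi=(\Omega,\mathcal F,\{\mathcal F_s\}_{s\geq 0},\dP,B(\cdot),\cv(\cdot))\in \cA$ and consider the associated solution $\xi^{t,x,\cv(\cdot)}$ of \eqref{SDEs-Srat}. Since $g_1(y)\le g_2(y)$ for every $y\in\R^n$, evaluating at the (random) terminal point $\xi^{t,x,\cv(\cdot)}(T)(\omega)$ gives
$$
g_1\bigl(\xi^{t,x,\cv(\cdot)}(T)(\omega)\bigr)\le g_2\bigl(\xi^{t,x,\cv(\cdot)}(T)(\omega)\bigr)\quad\text{for all }\omega\in\Omega.
$$
Taking essential suprema over $\omega\in\Omega$ and using the elementary monotonicity of $\ess\sup$ with respect to the $\dP$-almost sure order of measurable functions, one gets
$$
\ess\sup_{\omega\in\Omega}g_1\bigl(\xi^{t,x,\cv(\cdot)}(T)\bigr)\le \ess\sup_{\omega\in\Omega}g_2\bigl(\xi^{t,x,\cv(\cdot)}(T)\bigr).
$$

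Next I would take the infimum over $\pi\in\cA$. Since the above pointwise inequality in $\pi$ holds for every admissible control pair, the infimum of the left-hand side is bounded above by the infimum of the right-hand side, which yields
$$
V_1(t,x)=\inf_{\cv\in\cA}\ess\sup_{\omega\in\Omega}g_1\bigl(\xi^{t,x,\cv(\cdot)}(T)\bigr)\le \inf_{\cv\in\cA}\ess\sup_{\omega\in\Omega}g_2\bigl(\xi^{t,x,\cv(\cdot)}(T)\bigr)=V_2(t,x).
$$
Boundedness and uniform continuity of $g_1,g_2$ are only needed to ensure that $V_1$ and $V_2$ are well defined and finite, which is assumed in the setting of \eqref{reachSet}; the comparison itself does not require any PDE-level argument.

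There is essentially no obstacle here: the proof is just monotonicity of $\ess\sup$ and $\inf$, and it does \emph{not} rely on any comparison principle for the degenerate PDE of Definition \ref{giga} — which is fortunate, since such PDE comparison principles are, as stressed in the introduction, still largely open for horizontal mean curvature flow. The only minor care needed is to observe that $\ess\sup$ respects almost sure (indeed everywhere) inequalities between measurable random variables, which is a standard fact from measure theory.
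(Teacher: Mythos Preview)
Your argument is correct and is exactly the obvious monotonicity reasoning the paper has in mind; indeed, the paper simply states that ``the proof is obvious and therefore omitted.'' There is nothing to add.
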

The proof is obvious and therefore omitted.

\begin{lemma}[Value function is geometric]\label{geomstoch}
Let $g$ be bounded and uniformly continuous, and let $V_g$ be defined as in (\ref{reachSet}) with $g$ as terminal cost.
Let $\varphi: \R\to\R$ be continuous and strictly increasing. Then
$$
\varphi(V_g(t,x))=V_{\varphi(g)}(t,x)
$$
\end{lemma}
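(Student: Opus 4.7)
The proof rests on the observation that continuous strictly increasing maps commute with both essential suprema and infima of real-valued quantities. So the plan is simply to push $\varphi$ through the two layers of optimization in the definition of $V_{\varphi(g)}$, namely the inner $\ess\sup$ over $\omega$ and the outer $\inf$ over the admissible control set $\cA$.

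First I would establish the following auxiliary fact: for any real random variable $Y$ on $(\Omega,\mathcal{F},\dP)$ and any continuous strictly increasing $\varphi:\R\to\R$,
$$\ess\sup_{\omega}\varphi(Y(\omega)) \,=\, \varphi\bigl(\ess\sup_{\omega} Y(\omega)\bigr).$$
The inequality $\le$ is immediate from monotonicity applied to $Y\le M$ a.s., where $M:=\ess\sup Y$. For $\ge$, note that for every $\varepsilon>0$ we have $\dP(Y>M-\varepsilon)>0$, hence $\dP(\varphi(Y)>\varphi(M-\varepsilon))>0$, which shows $\ess\sup\varphi(Y)\ge\varphi(M-\varepsilon)$; letting $\varepsilon\downarrow 0$ and using continuity of $\varphi$ gives the claim. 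Applying this pointwise in $(t,x)$ to $Y=g(\xi^{t,x,\cv}(T))$, we obtain
$$\ess\sup_{\omega}\varphi(g)\bigl(\xi^{t,x,\cv}(T)(\omega)\bigr) \,=\, \varphi\bigl(\ess\sup_{\omega} g(\xi^{t,x,\cv}(T)(\omega))\bigr)$$
for every admissible $\cv\in\cA$.

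Next I would show the analogous commutation for the infimum: for any function $F:\cA\to\R$,
$$\inf_{\cv\in\cA}\varphi(F(\cv)) \,=\, \varphi\bigl(\inf_{\cv\in\cA} F(\cv)\bigr).$$
Again $\ge$ follows from $F(\cv)\ge N:=\inf F$ and monotonicity. For $\le$, pick an $\varepsilon$-minimizer $\cv_\varepsilon$ with $F(\cv_\varepsilon)<N+\varepsilon$; then $\varphi(F(\cv_\varepsilon))<\varphi(N+\varepsilon)$, so $\inf\varphi\circ F\le\varphi(N+\varepsilon)$, and continuity of $\varphi$ finishes the argument. Combining the two displayed identities,
$$V_{\varphi(g)}(t,x)=\inf_{\cv\in\cA}\ess\sup_{\omega}\varphi(g)\bigl(\xi^{t,x,\cv}(T)\bigr)=\inf_{\cv\in\cA}\varphi\bigl(\ess\sup_\omega g(\xi^{t,x,\cv}(T))\bigr)=\varphi(V_g(t,x)),$$
which is the desired identity.

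I do not expect any serious obstacle: the argument only uses that $\varphi$ is order-preserving and continuous, together with the very definition of $\ess\sup$ and $\inf$. In particular the identity holds without any reference to the underlying SDE, the sub-Riemannian structure, or the regularity of $g$ beyond what is needed to make $V_g$ and $V_{\varphi(g)}$ well defined; the admissible set $\cA$ is the same on both sides, and the trajectories $\xi^{t,x,\cv}$ are untouched by $\varphi$. The mildest care point is the continuity of $\varphi$, which is essential in passing to the limit $\varepsilon\downarrow 0$ in both commutation lemmas.
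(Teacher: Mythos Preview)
Your proof is correct and follows essentially the same approach as the paper: the paper's proof simply asserts that $\varphi(\inf A)=\inf\varphi(A)$ for bounded $A\subseteq\R$ and $\varphi(\ess\sup f)=\ess\sup(\varphi(f))$ for measurable $f$, then concludes. You have merely supplied the (standard) details of these two commutation facts that the paper leaves implicit.
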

\begin{proof}
 As $\varphi$ is increasing and continuous, $\varphi(\inf A)=\inf \varphi(A)$ for any bounded set $A\subseteq \R.$ 
Hence, for any measurable function $f:\Omega\to \R$, it is trivial to note:
$$ \varphi(\ess\sup f)=\ess\sup(\varphi(f))$$
 and so we can easily conclude the proof. 
%\begin{eqnarray*} \varphi(\ess\sup f)&=&\varphi\left(\inf\{\xi: f\le \xi {\ \rm a.e}\}\right) = \inf\{\varphi(\xi):\ f\le \xi {\ \rm a.e}\}= \inf\{\varphi(\xi):\ \varphi(f)\le \varphi(\xi) {\ \rm a.e}\}\\&=&\inf\{\xi':\ \varphi(f)\le \xi' {\ \rm a.e}\}=\ess\sup(\varphi(f))\end{eqnarray*} We conclude by taking  the infimum over the control space and by using again that $\varphi(\inf A)=\inf \varphi(A)$ for any bounded set $A\subseteq \R.$ 
 \end{proof}
% %%%%
\begin{rem}
Lemmas \ref{comstoch} and \ref{geomstoch} allow  us to conclude, reasoning as in \cite{Gigabook}, that the sublevel set $\{V(t,x)\le 0\}$ depends only on the set
$\{g(x)\le 0\},$ and not on the specific form of $g$. Therefore we could  introduce a new weak notion for the evolution by horizontal mean curvature by 
defining the zero-level set of the value function $V(t,x)$ as generalized solution at time $t,$ which is well-defined. 
%This  evolution   is well defined since it does not depend on the chosen parametrization, it  exists and it is unique.  
Moreover, in  Theorem \ref{TEOREMA1},  we will show that  $V(t,x)$ solves (in the viscosity sense) the level set equation for the evolution by horizontal mean curvature flow. Unfortunately,   without comparison principles and the resulting uniqueness for solutions of
the PDE, we  cannot say that this new definition is consistent with the classical evolution for regular hypersurfaces.
\end{rem}
%EXISTENCE
\begin{lemma}[Boundedness]\label{bounded}
Assume that  $g$ is bounded and that there exists a weak solution of the  controlled SDE \eqref{SDEs-Srat}  for at least one control $\cv(s)$, then  the value function  $V(t,x)$ defined  in (\ref{reachSet})
is bounded.
\end{lemma}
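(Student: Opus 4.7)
The plan is immediate from the definition of $V$ together with the assumed boundedness of $g$. Since $g$ is bounded, pick $M>0$ with $|g(y)|\le M$ for all $y\in\R^n$. Then for every weak admissible pair $\pi=(\Omega,\mathcal F,\{\mathcal F_t\}_{t\ge0},\dP,B,\cv)\in\cA$ and every $\omega\in\Omega$, the random variable $g(\xi^{t,x,\cv}(T))$ is pathwise bounded by $M$ in absolute value, so the same bound passes to its essential supremum:
$$
-M\;\le\;\ess\sup_{\omega\in\Omega}\,g\bigl(\xi^{t,x,\cv(\cdot)}(T)(\omega)\bigr)\;\le\;M.
$$

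The only point that requires the hypothesis is to ensure that these bounds are inherited by the infimum in \eqref{reachSet}. If $\cA$ were empty, the infimum would conventionally be $+\infty$ and the statement would fail. The assumption that there exists at least one control $\cv(s)$ for which the controlled SDE \eqref{SDEs-Srat} admits a weak solution produces at least one element of $\cA$, so the infimum is taken over a non-empty set. Taking infimum of a set of real numbers all lying in $[-M,M]$ yields a value in $[-M,M]$, whence
$$
|V(t,x)|\;\le\;M \qquad \text{for all } (t,x)\in[0,T]\times\R^n.
$$

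There is essentially no obstacle: the proof is a one-line consequence of the monotonicity of $\ess\sup$ and $\inf$ with respect to pointwise bounds. The only conceptual point worth flagging in the write-up is that the hypothesis on the existence of some admissible weak solution is exactly what is needed to rule out the degenerate case $\cA=\emptyset$; in the setting considered later (e.g.\ Lipschitz $\sigma$ and $\mu$, or Carnot groups where the drift vanishes), this is automatic from the classical strong existence results cited after \eqref{SDEs-Ito}.
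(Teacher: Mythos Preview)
Your argument is correct and is exactly the paper's approach: the paper's entire proof is the single sentence ``The property follows immediately once we know that the infimum is taken over a non-empty set,'' which is precisely what you have spelled out in detail.
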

\begin{proof}
The property follows immediately once we know that the infimum is taken over a non-empty set.
\end{proof}
 In order to investigate the continuity of the value function, we have to restrict our attention to the case of Carnot groups.
\begin{lemma}[Continuity in space]\label{stochcon}
Let $\G=(\R^n,\cdot)$ be a Carnot group, and suppose $g:\G\to \R$ is bounded and uniformly continuous on the one-point-compactification
of $\G$, i.e. it is  uniformly continuous on $\G$ and there exists $\lim_{|x|\to \infty}g(x)$.\\
 Then $V(t,x)$ defined  in (\ref{reachSet})
is continuous in space.
\end{lemma}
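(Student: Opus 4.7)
The plan is to exploit the Lie group structure of $\G$. Since $\G$ is a Carnot group, the vector fields $X_1,\dots,X_m$ spanning $V_1$ are left-invariant, i.e.\ $DL_h(x)X_i(x)=X_i(h\cdot x)$ for all $h,x\in\G$. Applying the classical Stratonovich chain rule to $\eta(s):=h\cdot\xi^{t,x,\cv}(s)$ shows that $\eta$ satisfies the same SDE \eqref{SDEs-Srat}, driven by the same $B^{\cv}$, started from $h\cdot x$. Since $\sigma$ is Lipschitz, strong uniqueness then yields the key identity
\begin{equation*}
\xi^{t,\,h\cdot x,\,\cv}(s)=h\cdot\xi^{t,x,\cv}(s)\quad\textrm{a.s.},\;s\in[t,T].
\end{equation*}

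Setting $h=y\cdot x^{-1}$ and combining this identity with the elementary estimates $|\inf_\cv A_\cv-\inf_\cv B_\cv|\le\sup_\cv|A_\cv-B_\cv|$ and $|\ess\sup a-\ess\sup b|\le\ess\sup|a-b|$, continuity of $V(t,\cdot)$ at $x$ reduces to showing that
\begin{equation*}
\omega_g(h):=\sup_{p\in\G}|g(h\cdot p)-g(p)|\longrightarrow 0\;\textrm{as}\;\|h\|_0\to 0.
\end{equation*}

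For this uniform estimate I would fix $\epsilon>0$, set $L:=\lim_{\|p\|_0\to\infty}g(p)$, and split according to a large radius $R$. On $\{\|p\|_0\le R\}$, note that $d_0(p,h\cdot p)=\|p^{-1}\cdot h\cdot p\|_0$ by left-invariance of the homogeneous distance; continuity of the conjugation in $h$ at the identity, together with compactness, makes this quantity uniformly small in $p$ as $\|h\|_0\to 0$, so uniform continuity of $g$ on $\G$ closes this case. On $\{\|p\|_0>R\}$, with $R$ chosen so that $|g(q)-L|<\epsilon/2$ whenever $\|q\|_0>R/2$ and with $\|h\|_0<R/2$, the triangle inequality in $d_0$ gives $\|h\cdot p\|_0\ge\|p\|_0-\|h\|_0>R/2$, so both $g(p)$ and $g(h\cdot p)$ lie within $\epsilon/2$ of $L$.

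The main difficulty---and the reason both the limit-at-infinity hypothesis on $g$ and the Carnot-group hypothesis are needed---is the non-commutativity of $\G$: for any fixed $h\neq 0$, the conjugate $p^{-1}hp$ is unbounded as $p$ varies (in $\dH^1$ one computes the extra vertical term $p_2 h_1-p_1 h_2$), so $d_0(p,h\cdot p)$ is not controlled by $\|h\|_0$ alone. Uniform continuity of $g$ is therefore insufficient on its own, and the limit at infinity is precisely what absorbs the contribution of the unbounded region; equally, without the Lie group structure the equivariance argument at the SDE level would break down, which is why the continuity statement is confined to Carnot groups.
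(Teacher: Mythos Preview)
Your argument is correct and follows essentially the same route as the paper: left-invariance of the vector fields gives equivariance of the Stratonovich SDE under left translations, which reduces the question to controlling $\sup_{p}|g(h\cdot p)-g(p)|$; this is then handled by splitting into a compact region (uniform continuity of $g$ plus continuity of the group law on compacts) and the complement (limit of $g$ at infinity). The only cosmetic differences are that the paper works with the Euclidean norm rather than $\|\cdot\|_0$ and obtains the one-sided estimate by fixing an $\epsilon$-optimal control (then swapping $x$ and $y$), whereas you use the cleaner inequalities $|\inf-\inf|\le\sup|\cdot|$ and $|\ess\sup-\ess\sup|\le\ess\sup|\cdot|$ directly.
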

\begin{proof} Denote by $L_a(\cdot)$ the left translation in the Carnot group by the element $a\in \G.$ . As $\G$ is a Lie group, we may assume that the vector fields $X_1,\ldots,X_m$ are
left-invariant, i.e. 
 \begin{equation}\label{leftinvariant}
X_i(a\cdot x)=X_i (L_a (x))=(DL_a)(X_i(x)),
\end{equation}
for $i=1,...,m$, where $DL_a$ is the derivative of the left
translation (see e.g. \cite{Lie} for more details on Lie groups).
Let $\xi^{t,x,\cv(\cdot)}$ be a constrained codimension one
horizontal Brownian motion, with
$d\xi^{t,x,\cv(\cdot)}=\sqrt{2}\sigma^T(\xi^{t,x,\cv(\cdot)}(s))
\circ d B^{\cv}(s),$
 then, by the chain rule for Stratonovich integrals, it holds
\begin{multline}\label{translate}
d\left(L_a\left(\xi^{t,x,\cv(\cdot)}\right)\right)=
(DL_a)\circ\left(\sqrt{2}\sigma^T\left(\xi^{t,x,\cv(\cdot)}(s)\right)\circ  d
  B^{\cv}(s)
\right)\\
{=}
\sqrt{2}(DL_a)\left(\sigma^T\left(\xi^{t,x,\cv(\cdot)}(s)\right)\right)
\circ  d B^{\cv}(s)
=\sqrt{2}\sigma^T\left(L_a\left(\xi^{t,x,\cv(\cdot)}(s)
\right)\right)\circ  d B^{\cv}(s)
\end{multline}
%ND:[I did the computation by expressing everything as Ito integrals, but this computation does not belong in this paper.] 
where we have used (\ref{leftinvariant}) for the last equality. Hence the left translation of a codimension 1 horizontal Brownian motion
yields another one. 
Now fix a point $x,$  $\epsilon>0$ and choose a control $\cv_x$ such that
$$
V(t,x)+\epsilon \ge \ess\sup g(\xi^{t,x,\cv_x(\cdot)}(T)).
$$
Let $a=y\cdot x^{-1}.$ By (\ref{translate}), the path $\eta^{t,y,\cv_x(\cdot)}$ starting at the time $t$ in $y$, is  equal to $L_a(\xi^{t,x,\cv_x(\cdot)}).$ 
(Note that the control $\cv_x$ is the same for both points $x$ and $y$.)
Therefore
$$
\begin{array}{rcl}
V(t,y)&\le& \ess\sup g(\eta^{t,x,\cv_x(\cdot)}(T))= \ess\sup g(L_a(\xi^{t,x,\cv_x(\cdot)}(T)))
\\&=&\ess\sup \left(g(\xi^{t,x,\cv_x(\cdot)}(T))+\left(g(L_a(\xi^{t,x,\cv_x(\cdot)}(T)))-g(\xi^{t,x,\cv_x(\cdot)}(T)) \right)\right)
\\&\le& V(t,x)+\epsilon
+\ess\sup \left | g\left(L_a(\xi^{t,x,\cv_x(\cdot)}(T))\right)-g\left(\xi^{t,x,\cv_x(\cdot)}(T)\right)\right |
\end{array}
$$
Choose a large number $R>0$ then
\begin{multline*}
\ess\sup_{\omega\in\Omega} \left | g\left(\xi^{t,x,\cv_x(\cdot)}(T)(\omega)\right)-g\left(L_a(\xi^{t,x,\cv_x(\cdot)}(T)(\omega))\right)\right |
\\ \le \sup_{\{z\in \R^n:\  |z|<R\}}|g(z)-g(a\cdot z)|+\sup_{\{z\in \R^n:\  |z|\ge R\}}|g(z)-g(a\cdot z)|=:A+B
\end{multline*}
where we set $z= \xi^{t,x,\cv_x(\cdot)}(T)(\omega)$ and so $a\cdot z=L_a(\xi^{t,x,\cv_x(\cdot)}(T)(\omega))$. \\
Note that $|a\cdot x|\to \infty$ if $|x|\to \infty$.
% (In fact, if not, there exist $x_n$ and $z$ such that $|x_n|\to \infty$  but $y_n:=a\cdot x_n\to \overline{y}.$
%By continuity of the group operation $a^{-1}\cdot y_n\to a^{-1}\cdot \overline{y},$ which is absurd because
%$|a^{-1}\cdot y_n|=|x_n|\to \infty.$)  
Therefore we can use the continuity of $g$ at $\infty$ to find a sufficiently large $R$ such that $B<\epsilon.$
As, by continuity of the group operation, $|a\cdot x-x|\to 0$ (uniformly on compact sets) as $|a|=|y\cdot x^{-1}|\to 0,$ 
we can use the uniform continuity of $g$ to find 
%a small $\delta'>0$ (depending on $R$) such that $A<\epsilon$ for $|a|<\delta'.$ 
%Finally we an find $\delta>0$
%such that $|a|=|y\cdot x^{-1}|<\delta'$ for $|x-y|<\delta.$ (Here we use that the identity in the Lie group is the origin in $\R^n$.) 
%So we can find 
$\delta>0$ such that 
$
V(t,y)\le V(t,x)+3\epsilon
$ for $|x-y|<\delta.$ Reversing the role of $x$ and $y$ yields the continuity.
\end{proof}
%{\clb CONTINUITY IN TIME??????}
% %%%%
% %%%%
% %%%%
 \section{Existence of a generalized evolution by horizontal mean curvature flow in  sub-Riemannian manifolds.}

Using the value function for the stochastic control problem introduced in the previous section as representation for the viscosity solution of equation \eqref{levelSetMCE}, we get an existence result for the generalized evolution by horizontal mean curvature flow as given in Definition \ref{giga}. By classical results   (see  e.g. \cite{sonerbook}, \cite{touziNote}), it is known how to find the equation solved by value functions  of the form
$\inf_{\cv\in \cA}\dE\big[g(\xi^{t,x,\cv(\cdot)}(T)\big]$. Unfortunately,
the value function $V(t,x)$ defined in \eqref{reachSet} looks different, because of the essential supremum instead of the expectation. Hence the idea (already used in \cite{buckdahn}) is to approximate formula  \eqref{reachSet}  with functions that look like the infimum of an expectation and then to pass to the limit,
essentially using the fact that the $L_p$-norm of  a fixed nonnegative function converges to the essential supremum as $p\to\infty.$ \\

Since we are only able to show that the value functions defined in  \eqref{reachSet} are  lower semicontinous,  we need to use the viscosity theory for discontinuous functions.
Next we will recall the definition. For more details on this theory, we refer to \cite{barles}.
\begin{defi}
\label{barles}
A  locally bounded function $u:\R^n\times [0,T] \to \R $ is a discontinuous viscosity solution of equation \eqref{BackMChor}, if $u^*(t,x)$  is a viscosity subsolution and $u_*(t,x)$ is a viscosity supersolution of the same equation, 
where $u^*$ and $u_*$
 are respectively  the  upper and lower 
semicontinuous envelope of $u$, i.e.
$$ u^*(t,x):= \inf\{v(t,x) |v \;\textrm{cont. and}\; v\geq u \}= \limsup_{r \to 0^+} \{u(s,y) |\; |y-x|\leq
r, |t-s|\leq r\},
$$
$$ u_*(t,x):= \sup\{ v(t,x) |v \; \textrm{cont. and}\; v\leq u\}=\liminf_{r \to 0^+} \{u(s,y) |\; |y-x|\leq
r|t-s|\leq r\}.
$$
\end{defi}
 The main result of this paper is the following existence theorem.
\begin{teo}
\label{TEOREMA1} 
 Let $g : \R^n \to \R$  bounded and H\"older continuous, $T>0$ and $\sigma(x)=[X_1(x),...,X_m(x)]^T$ a $m\times n$-H\"ormander matrix with $m\leq n$ and smooth coefficients.   Assuming that  $\sigma(x)$ and $\sum_{i=1}^m\nabla_{X_i}X_j(x)$ are Lipschitz (in order to have non-explosion for the solution of the SDE),  then the value function $V(t,x)$ defined by \eqref{reachSet} is a bounded lower semicontinuous  viscosity solution of the level set equation for the evolution by horizontal mean curvature flow \eqref{BackMChor}, with terminal condition $V(T,x)=g(x)$.
\end{teo}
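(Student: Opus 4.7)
The plan follows the approximation strategy of \cite{buckdahn}, adapted to the sub-Riemannian setting via the horizontal controlled diffusion defined by \eqref{SDEs-Srat}. Without loss of generality I assume $g\geq 1$ (by Lemma \ref{geomstoch} the addition of a positive constant to $g$ affects neither the sub- and superlevel sets nor the geometric equation), so that for each $p\geq 1$ I can introduce the approximating value functions
$$
V_p(t,x):=\inf_{\cv\in\cA}\bigl(\dE\bigl[g^p\bigl(\xi^{t,x,\cv(\cdot)}(T)\bigr)\bigr]\bigr)^{1/p}.
$$
Because $\|\cdot\|_{L^p(\Omega,\dP)}$ is nondecreasing in $p$ and converges monotonically to the essential supremum, a direct argument interchanging the monotone limit in $p$ with the infimum over $\cv$ shows $V_p(t,x)\uparrow V(t,x)$ pointwise in $(t,x)$. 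Each $V_p$ is a standard expectation-type value function, hence by classical stochastic control theory (see \cite{sonerbook,touziNote}) continuous in $(t,x)$.

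Setting $W_p:=V_p^p$, the dynamic programming principle together with \eqref{generatorformula} shows that $W_p$ is a viscosity solution of the Hamilton--Jacobi--Bellman equation
$$
\partial_t W_p+\inf_{\cv\in\cV}\,\tr\bigl(\cv^2\,(\cX^2W_p)^{*}\bigr)=0,\qquad W_p(T,x)=g^p(x),
$$
where the first-order drift contributed by $\mu$ in \eqref{SDEs-Ito} is absorbed into $(\cX^2 W_p)^{*}$ via the identity \eqref{panino}. Rewriting this equation for $V_p=W_p^{1/p}$ produces an additional nonnegative penalty of order $(p-1)|\cv\,\cX V_p|^{2}/V_p$; as $p\to \infty$ this penalty forces any minimizing $\cv^{2}$ to annihilate $\cX V_p$ whenever the latter is nonzero. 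A direct eigenvalue computation over the admissible set $\cV$ gives
$$
\inf_{\cv\in\cV}\,\tr\bigl(\cv^{2} M\bigr)=\tr(M)-\lambda_{\max}(M)
$$
for any symmetric $M\in\R^{m\times m}$, attained at the orthogonal projector of corank one whose kernel is spanned by an eigenvector for $\lambda_{\max}(M)$. Applied to $M=(\cX^2\varphi)^{*}$ for a smooth test function $\varphi$, this gives, in the limit $p\to \infty$, exactly $\Delta_0\varphi-\Delta_{0,\infty}\varphi$ at points where $\cX\varphi\neq 0$ (penalty dominates, forcing the projection kernel to align with $\cX\varphi/|\cX\varphi|$) and $\Delta_0\varphi-\lambda_{\max}((\cX^2\varphi)^{*})$ or $\Delta_0\varphi-\lambda_{\min}((\cX^2\varphi)^{*})$ at characteristic points, depending on whether one is testing a super- or a subsolution, matching Definition \ref{giga}.

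The passage to the limit $p\to\infty$ is carried out by the Barles--Perthame half-relaxed limit method: setting $\overline V:=\limsup\nolimits^{*}V_p$ and $\underline V:=\liminf\nolimits_{*}V_p$, the stability theorem for discontinuous viscosity solutions of \cite{barles} ensures that $\overline V$ is a viscosity subsolution and $\underline V$ a viscosity supersolution of \eqref{BackMChor} in the sense of Definition \ref{barles}. Monotonicity $V_p\uparrow V$ identifies $\underline V=V_{*}$ with the lower semicontinuous envelope and gives $\overline V\leq V^{*}$, so that $V$ itself is a discontinuous viscosity solution. Boundedness of $V$ is Lemma \ref{bounded}; lower semicontinuity follows since $V$ is the pointwise supremum of the continuous functions $V_p$. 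The terminal condition $V(T,x)=g(x)$ is obtained from the H\"older continuity of $g$ together with the standard moment estimate $\sup_{\cv}\dE[|\xi^{t,x,\cv}(s)-x|^{2}]\leq C(s-t)$, which is guaranteed by the Lipschitz assumption on $\sigma$ and $\mu$. \emph{The main obstacle} is the careful implementation of the second paragraph: since the singular set $\{(x,p)\,:\,\sigma(x)p=0\}$ has positive and $x$-dependent dimension, neither the optimal $\cv$ nor the limit Hamiltonian is continuous in $(x,p,S)$, so the passage $p\to\infty$ must be performed separately on test functions with $\cX\varphi\neq 0$ and those with $\cX\varphi=0$, selecting in the two cases $\Delta_{0,\infty}$ or the appropriate extremal eigenvalue of $(\cX^2\varphi)^{*}$. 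This delicate case analysis is left to the Appendix.
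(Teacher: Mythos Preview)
Your proposal follows essentially the same route as the paper: the $L^p$-approximation $V_p$, the HJB equation for $W_p=V_p^p$, the rewriting for $V_p$ with the $(p-1)|\cv\,\cX V_p|^2/V_p$ penalty, the eigenvalue identity $\inf_{\cv}\tr(\cv^2 M)=\tr M-\lambda_{\max}(M)$, and the half-relaxed limit passage with the characteristic/noncharacteristic case split deferred to the Appendix are all exactly what the paper does.

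One step in your write-up is not quite right, though. You argue that monotonicity gives $\overline V\leq V^{*}$ and conclude from this that $V$ is a discontinuous viscosity solution. But knowing that $\overline V$ is a subsolution and $\overline V\leq V^{*}$ does \emph{not} imply that $V^{*}$ is a subsolution, which is what Definition~\ref{barles} requires. You need the reverse inequality as well: since $V_p\uparrow V$ pointwise, $\overline V=\limsup^{*}V_p\geq V$, and $\overline V$ is upper semicontinuous, so $\overline V\geq V^{*}$; combined with your inequality this gives $\overline V=V^{*}$, and now the subsolution property transfers. The paper makes exactly this two-sided identification $V^{\sharp}=V^{*}$ at the start of its Appendix proof. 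Also, be aware that the generic Barles--Perthame stability you invoke does not apply off-the-shelf here because the limit Hamiltonian is discontinuous on an $x$-dependent set of positive dimension; the paper handles this by a hand-crafted argument using the $C^1$-regularity of $\lambda_{\max}$ near matrices with simple top eigenvalue (Lemma~1.2 of \cite{buckdahn}) for the supersolution side and the pointwise bound $H_p\geq\overline H$ for the subsolution side---this is presumably the content of the ``delicate case analysis'' you defer.
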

%%%Change a bit if we get the non-expl for Carnot groups by exponential map!!!!!!
%DUE!!!!!!!!!!!!!
   \begin{rem}[Examples]
We recall that the Lipschitz requirement of the  theorem is always satisfied if the  vector fields $X_1,...,X_m$ 
and their derivatives are smooth with at most linear growth a infinity. 
Hence the previous 
theorem covers many sub-Riemannian geometries, as, for example,  the Heisenberg group, the Gru\v{s}in plane and the roto-translation geometry, 
introduced in Examples \ref{Ex:Heisenberg}, \ref{Ex:Grusin} and, \ref{Ex:Visual}. 
\end{rem}
Next we give a proof that works rigourously just under additional, strong assumptions on the  regularity. In fact, for passing to the   limit, we will need a strong convergence of the approximating functions together with all the derivatives involved. In general those assumptions are not satisfied and indeed $V(t,x)$ is just a viscosity solution and not a classic solution. In the Appendix, we will present a viscosity proof that follows the one used in the Euclidean case by Buckdahn, Cardaliaguet and Quincampoix in \cite{buckdahn}. Nevertheless, the following proof gives a clear idea of why the 
optimal control, realizing the infimum in \eqref{reachSet}, is given by the projection on  the tangent space of the level set of the solution at that point.\\
 Before we start with the heuristic arguments, let us introduce a regularization which will be used both for the heuristics and for a rigorous viscosity
proof.
As the essential supremum is the limit of $L^p$-norms, we consider, following \cite{buckdahn},
for any $1<p<+\infty$, the following value functions:
\begin{equation}
\label{approximatingFunction}
V_p(t,x) = \inf _{\cv\in \cA}\ \left(\dE\big[g^p(\xi^{t,x,\cv(\cdot)}(T))\big]\right)^{1/p}.
\end{equation}
Note that we can replace $g$ by
$ag+b$ for real numbers $a$ and $b$ and therefore, as $g$ is bounded, assume
$C\geq g (x)\geq 1$, (for any $C>0$) \\
The idea is to derive the PDE solved by  the value functions \eqref{approximatingFunction} and then to show that $V$ is  
their limit as $p\to +\infty$ and solves  a limit-equation which is exactly the level set equation for the evolution by horizontal mean curvature flow,
\eqref{levelSetMCE}.  In fact, we have
\begin{lemma}\label{pointconv}
Under the assumptions of Theorem \ref{TEOREMA1}, we have 
\begin{equation}
\label{limiteOlivia}
V(t,x)=\lim_{p\to +\infty} V_p(t,x)\quad {\rm\ for\ all\ } (t,x)\in [0,T]\times \R^N \ (pointwise\ convergence)
\end{equation}
with $V(t,x)$ as in (\ref{reachSet}).
\end{lemma}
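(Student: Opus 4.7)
The plan is to prove $\lim_{p\to\infty} V_p(t,x)=V(t,x)$ by establishing the two inequalities $V_\infty\le V$ and $V\le V_\infty$, where $V_\infty(t,x):=\lim_{p\to\infty}V_p(t,x)$. First I would normalize $g$ by the affine transformation $g\mapsto ag+b$ (as suggested in the statement) so that $1\le g\le C$; this guarantees $g^p$ makes sense for every real $p\ge 1$ and lets me invoke the elementary fact that for any random variable $f$ on a probability space with $1\le f\le C$, Jensen's inequality makes $p\mapsto\|f\|_p=(\dE[f^p])^{1/p}$ nondecreasing with $\lim_{p\to\infty}\|f\|_p=\ess\sup f$. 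Consequently $V_p$ is nondecreasing in $p$ and the pointwise limit $V_\infty\in[1,C]$ exists.

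The upper bound is immediate: for every admissible $\cv\in\cA$, $\bigl(\dE[g^p(\xi^{t,x,\cv}(T))]\bigr)^{1/p}\le\ess\sup g(\xi^{t,x,\cv}(T))$, so $V_p(t,x)\le V(t,x)$ for every $p$, and $V_\infty\le V$.

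For the reverse inequality, I would fix $\varepsilon>0$ and, for each $p$, pick an $\varepsilon$-optimal control $\cv_p\in\cA$ with $\|g(\xi^{t,x,\cv_p}(T))\|_p\le V_p(t,x)+\varepsilon\le V_\infty+\varepsilon$. The strategy is to extract a subsequence $p_k\to\infty$ along which the joint laws of $(\xi^{t,x,\cv_{p_k}}(\cdot),\cv_{p_k}(\cdot))$ converge weakly to the law of an admissible pair $(\xi^{t,x,\cv^*},\cv^*)\in\cA$. Tightness of the controls follows from their values lying in the compact set of projections in $S_m$, tightness of the trajectories follows from uniform moment bounds together with Kolmogorov's criterion applied to the Lipschitz It\^o SDE \eqref{SDEs-Ito}, and identification of the weak limit as a solution of the same controlled SDE driven by $\cv^*$ proceeds via a standard martingale-problem argument. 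Granting this, for each fixed $q\ge 1$, the monotonicity of $L^p$-norms combined with weak convergence of the terminal laws and with continuity and boundedness of $g^q$ yields
\[
\|g(\xi^{t,x,\cv^*}(T))\|_q=\lim_{k\to\infty}\|g(\xi^{t,x,\cv_{p_k}}(T))\|_q\le\liminf_{k\to\infty}\|g(\xi^{t,x,\cv_{p_k}}(T))\|_{p_k}\le V_\infty+\varepsilon.
\]
Sending $q\to\infty$ gives $\ess\sup g(\xi^{t,x,\cv^*}(T))\le V_\infty+\varepsilon$, hence $V(t,x)\le V_\infty+\varepsilon$, and $\varepsilon\to 0$ closes the argument.

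The main obstacle is the compactness/closure step in the reverse inequality: one has to verify that the set $\cA$ of weak-admissible controlled pairs is closed under the appropriate weak convergence, so that the limit $\cv^*$ of approximately optimal controls remains admissible and the limit process still solves the Stratonovich SDE \eqref{SDEs-Srat}. This is a purely stochastic-analysis point, independent of the sub-Riemannian structure, and should be adaptable from the Euclidean treatment of \cite{buckdahn} using the Lipschitz and non-explosion assumptions built into Theorem \ref{TEOREMA1}.
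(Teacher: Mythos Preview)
Your proposal is correct and follows essentially the same route as the paper: the easy inequality $V_p\le V$ comes from $\|\cdot\|_p\le\|\cdot\|_\infty$, and the reverse inequality is reduced to a compactness/closure step for weak-admissible controls, which both you and the paper defer to \cite{buckdahn} and \cite{Yong}. The paper is slightly more explicit than you about the one structural point that makes the adaptation from the Euclidean case go through: the law of the controlled process depends on the control only through $\cv^2$ (both the drift in \eqref{SDEs-Ito} and the quadratic variation of the martingale part involve $\cv^2$), and the set $\{\cv^2:\cv\in\cV\}=\{A\in S_m:0\le A\le I_m,\ \tr A=m-1\}$ is convex, which is exactly what the relaxed-control compactness results in \cite{Yong} need. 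A minor slip: the elements of $\cV$ are not in general projections---only the optimal control $I_m-n_0\otimes n_0$ is---but $\cV$ is nonetheless a compact subset of $S_m$, so your tightness argument is unaffected.
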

As the $L^p$ norms are increasing and bounded by the essential supremum, for each fixed control, it is clear that 
$V(t,x)\ge \lim V_p(t,x).$ In order to show equality, we can argue  as in \cite{buckdahn}, noting that in our case the controlled SDE 
has a drift part, but it depends
on the control only through $\nu^2,$ and our control set is, as the one in \cite{buckdahn}, convex in $\nu^2.$ Hence similar arguments,
based on \cite{Yong}, apply.
Let us now explain heuristically how to find the equation solved by $V.$
\begin{proof} [Heuristic proof]
 We first look at
\begin{equation}
\label{Up}
U_p(t,x) =V^p_p(t,x)= \inf _{v \in \mathcal V} \dE\big[g^p(\xi^{t,v,x}(T))\big]
\end{equation}
It is known (see  e.g. \cite{sonerbook}, \cite{touziNote}) that $U_p(t,x)$ is a viscosity solution of
\begin{equation}
\label{PDE_p^p}
\left\{
\begin{aligned}
-(U_p)_t+H(x,DU_p,D^2U_p)=0,\quad &x\in  \R^n,\,t\in [0,T)\\
U_p(T,x)=g^p(x),\quad &x\in \R^n
\end{aligned}
\right.
\end{equation}
where 
\begin{equation}
\label{formulaTouzi}
H(x,p,S)=\sup_{\cv\in \cA}\left[-\frac{1}{2}\tr\big[F(x,\cv)F^T(x,\cv)S\big] + \big<\mu(x,\cv), p\big>\right]
\end{equation}
whenever $ \xi^{t,v,x}(T)$ is a solution of the stochastic ODE
$$
\left\{
\begin{aligned}
d\xi^{t,x,\cv(\cdot)}(s)&=F(\xi^{t,x,\cv(\cdot)}(s),\cv)  d B(s)+ \mu(\xi^{t,x,\cv(\cdot)}(s),\cv) ds, \quad s\in (t,T]\\
\xi^{t,x,\cv(\cdot)}(t)&=x
\end{aligned}
\right.
$$
In order to compute the structure of the Hamiltonian associated to \eqref{Up}, we have to consider the  It\^o formulation of the stochastic control problem, given in \eqref{SDEs-Ito}, that means $F(x,\cv)=\sqrt{2}\sigma^T(x) \cv(s)  $ and $\mu(x,\cv)=\sum_{i,j=1}^m (\cv^2(s))\nabla_{X_i}X_j(x)$. Hence \eqref{formulaTouzi} becomes
\begin{equation}
\label{eva}
H(x,p,S)=\sup_{\cv\in \cA}\left[-\tr\big(\sigma(x)S\sigma^T(x)\cv^2(s)\big)+\sum_{i,j=1}^m (\cv^2(s))_{i,j}\big<\nabla_{X_i}X_j(x),p\big>
\right]
\end{equation}
for any $x,p\in \R^n$ and any symmetric $n\times n$ matrix $S.$ \\

Unlike for Euclidean  mean curvature flow, our Hamiltonian depends on the gradient and on space, not just on $S$, (In fact in the Euclidean case 
$\nabla_{X_i}X_j=0$ and $\sigma(x)=Id$).
Assuming $g$ locally Lipschitz (or locally H\"older) and $\sigma(x)$ smooth (hence locally Lipschitz), it is possible to show the continuity of $U_p$, exactly as in Propositions 1.3 and 1.4  in \cite{touziNote}. So $U_p$ is a (continuous) viscosity solution of \eqref{PDE_p^p} with $H$ given by \eqref{eva}.\\

%PDE for V_p
Once we know the  equation solved by $U_p(t,x)=V_p^p(t,x)$, we can easily find the corresponding PDE for $V_p$.\\
Since $C\geq g (x)\geq 1,$   $V_p\geq 1>0$, too, and  we can divide by 
$pV_p^{p-1}$. Assuming that all functions involved are smooth, a trivial calculation tells that $V_p$ solves
\begin{equation}
\label{PDE_p}
\left\{
\begin{aligned}
-(V_p)_t+H(x,DV_p,(p-1)V_p^{-1}DV_p(DV_p)^T+D^2V_p)=0,\quad &x\in \R^n,\, t\in [0,T)\\
V_p(T,x)=g(x),\quad &x\in \R^n
\end{aligned}
\right.
\end{equation}
Whenever  $V_p$ is just continuous, we can show that it solves equation \eqref{PDE_p} in the viscosity sense, by applying the previous  calculation  to the (smooth) test functions. 
The continuity for $V_p$  follows from the continuity for $U_p$.\\
%I delated the Lemma}
%%%%%LIMTE
The following computations conclude the proof, assuming $V_p$ and $V$ are so regular that   
$$
\big(V_p\big)_t\to V_t,\quad DV_p\to DV,
\quad D^2V_p\to D^2V \quad \textrm{as}\;p\to +\infty
$$
As we already remarked, those assumptions are in general not satisfied and so we will give in the Appendix 
a more rigourous viscosity proof.\\
%DUE BETTER: we mean it is regourous in the smooth case but it gives also an intuitive just on the viscosity case via stability
However,
% the following limit gives a intuitive justification of the fact that $V(t,x)$ solves in the viscosity sense the level set equation for the evolution for horizontal mean curvature flow,  since it is very well-known that, usually, viscosity solutions are stable under the pointwise convergence.
%%%NICK: I do not understand this remark. If the stability theorem applies, then why is the proof not rigorous?
%Moreover, 
the following computation explains why
the supremum in \eqref{eva} is attained whenever
$\overline{\cv}=I_m-n_0\otimes n_0$, where $n_0 $ is the horizontal normal to the level
set (at the non-characteristc points).\\
So, let us first assume  
$\cX V(t,x)=\sigma(x)DV(t,x)\neq 0$ (implies $\cX V_p(t,x)\neq 0$, at least, for large $p$).
We can write  explicitely the Hamiltonian in \eqref{PDE_p}:
\begin{multline}
\label{Roberto0}
 H(x,DV_p,(p-1)V_p^{-1}DV_p(DV_p)^T+D^2V_p)=\\
 \sup_{\cv \in \cA}\bigg[-(p-1) \tr\big[V_p^{-1} \cv \cv^T (\sigma(x)DV_p)(\sigma(x)DV_p)^T\big]+\tr\big[\cv\cv^T\sigma(x) D^2 V_p\sigma^T(x)\big]\\
 + \sum_{i,j=1}^m (\cv^2(s))_{i,j}\big<\nabla_{X_i}X_j(x),DV_p\big>\bigg]
\end{multline}
Recalling that $(\cX^2V_p)^*=\sigma(x) D^2V_p \sigma^T(x)+A(x,DV_p)$ where $A(x,p)$ is defined by \eqref{matricial}, we  observe that
$$
\tr(\cX^2V_p)^*=\tr \big(\sigma(x) D^2V_p \sigma^T(x)\big)+\tr \big(A(x,DV_p)\big)
$$
The trace of the first order part is 
$$
\tr A(x,DV_p)= \sum_{i=1}^m\big<\nabla_{X_i}X_i(x),DV_p\big>
$$
so Hamiltonian \eqref{Roberto0} can be rewritten in ``horizontal notation'' as
\begin{multline}
\label{Roberto}
  H(x,DV_p,(p-1)V_p^{-1}DV_p(DV_p)^T+D^2V_p)= \\
  \sup_{\cv \in \cA}\bigg[-(p-1)V_p^{-1} \tr\big[ \cv \cv^T (\cX V_p)(\cX V_p)^T\big]+\tr\big[\cv \cv^T (\cX^2 V_p)^*\big]\bigg]
\end{multline}
Note that 
 $$
-(p-1)V_p^{-1} \tr\big[ \cv \cv^T (\cX V_p)(\cX V_p)^T\big]
=-(p-1)V_p^{-1}\tr\big[\big( \cv^T\cX V_p\big)\big(\cv^T\cX V_p\big)^T\big]
\le 0,
$$
and so it goes to $-\infty$ as $p\to +\infty$.
Hence, in order to attain the supremum, we need (at least for large $p$) that
$\cv^T\cX V_p=0$. 
Since the horizontal gradient is in the direction of the horizontal normal,  the optimal control $\overline{\cv}$ has to  coincide with the projection on the tangent space, that means 
\begin{equation}
\label{optimalControl}
\overline{\cv}=I_m-n_0\otimes n_0
\end{equation}
To get the level set equation, we have to write Hamiltonian \eqref{Roberto0}  in a bit different way. Let $I_m$ be the $m\times m$ identity-matrix,   we can replace
 $\cv^2$ by $I_m-a \otimes a$ with $a\in \R^m$, then, for any $n\times n$ matrix $S$, it holds
 $$
\sup_{\cv \in \cA} \big[-\tr\big(\cv^2S\big)]
=\sup_{\cv \in \cA} \big[-\tr\big((I_m-a \otimes a)S\big)]
=-\tr[S]+\max_{|a|=1}\big<Sa,a\big>$$
Using the optimal control \eqref{optimalControl}  and recalling that $n_0=\cX V/|\cX V|$ and $S = (\cX^2 V)^*$,  we can conclude that  the limit Hamiltonian, as $p\to \infty$, is  
$$
H(x, DV,D^2 V)
=-\tr \big[(\cX^2 V)^*\big]+\bigg<(\cX^2 V)^*\frac{\cX V}{|\cX V|},\frac{\cX V}{|\cX V|}\bigg>=-\Delta_0V+\Delta_{0,\infty} V
$$
So the limit equation of problems   \eqref{PDE_p},  as $p\to +\infty$, is exactly    the level set equation for the horizontal evolution by mean curvature, at  non-characteristic points. \\
Next we would like to investigate what happens at the characteristic points, so we assume $\cX V(t,x)=0$. Passing to the limit  in \eqref{Roberto}, the first order disapears whatever the control $\cv$  looks like. Then we get, for any control $\cv=I_m-a\otimes a$ and $S = (\cX^2 V)^*$, 
\begin{multline*}
0=-V_t+\sup_{\cv\in \cA}\bigg[-\tr\big[\cv\cv^T (\cX^2 V)^*\big]\bigg]=-V_t-\tr (\cX^2 V)^*+\max_{|a|=1}\tr\big(aa^T(\cX^2 V)^*\big)\\
=-V_t-\Delta_0 V+\lambda_{\max}(\cX^2 V)^*
\end{multline*}
So we find, as expected, 
the upper semicontinous regularization of the equation.  \\

We like to conclude this proof with some remarks on the regularity of $V(t,x)$ and the initial condition.\\
First we can note that $V(t,x)$ is bounded since the datum $g$ is. 
Moreover, since  $V_p$ is a 
non-decreasing sequence  of continuous functions, we have 
\begin{equation}
\label{lago}
V(t,x)= \lim_{p\to +\infty} V_p(t,x)=\sup_{p>1} V_p(t,x),
\end{equation}
Hence $V(t,x)$ is, a priori,  just lower semicontinuous. To conclude, we observe that   $V(T,x)=\lim_{p\to +\infty}V_p(T,x)=g(x)$. 
\end{proof}
%%%
\begin{ex}
In the case of the Heisenberg group, all  the hypersurfaces quoted in  Examples \ref{variuosEx} are covered by the existence result, given by Theorem \ref{TEOREMA1}.
In fact, whenever the hypersurface is compact and $C^1$ (like in the Euclidean ball), we can choose as initial datum $g$, the signed Euclidean distance in a neighborhood, extended  continuously by constants outside, so that the corresponding $g$  is bounded and Lipschitz continuous. For the Carnot-Carath\'eodory ball, we can define $g(x)=d(x,0)-1$ locally around  the surface and  constant outside. The datum in this case is  bounded and locally H\"older continuous (since it is Lipschitz w.r.t. the Carnot-Carath\'eodory metric $d$).  The same holds for  the Kor\'anyi ball.
\end{ex}
%continuity
%%%%If we will have the continuity also in time, change the result in Carnot groups and give the continuous result as second theorem.

We would like to show  that the functions defined in  \eqref{reachSet} are continuous in time and space and to get so the existence of a continuous viscosity
solution of the level set equation for horizotal mean curvature flow. \\
%%%%
%%%%
%changed
Unfortunately,  so far we are able to show directly the continuity of the value function   \eqref{reachSet}  just w.r.t. $x$ and for Carnot groups (Lemma \ref{stochcon}).\\
Following the strategy introduced in \cite{buckdahn}, it is possible to get the continuity of the solution via comparison principles for viscosity
subsolutions and supersolutions.
The problem with  generalizing this idea to the sub-Riemannian setting is that comparison principles are known just for special initial data in
Carnot groups, hence so far we are able to   get a full continuous existence result just for the class of initial data covered by Theorem \ref{unicita}
by \cite{capognaCitti}.
\begin{prop}
\label{TEOREMA2} 
 Let $g : \R^n \to \R$  bounded and H\"older continuous, $T>0$ and $\sigma(x)$ an $m\times n$-H\"ormander matrix like in Theorem \ref{TEOREMA1}. If comparison principles for \eqref{BackMChor} hold, then the value function $V(t,x)$ defined by \eqref{reachSet} is the unique continuous viscosity solution of the level set equation \eqref{BackMChor}, satisfying $V(T,x)=g(x)$.
\end{prop}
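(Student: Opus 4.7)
The proof follows the classical bootstrapping scheme of \cite{buckdahn}, namely: upgrade the lower semicontinuous solution produced by Theorem \ref{TEOREMA1} to a continuous one via the comparison principle, and then deduce uniqueness directly from comparison. The plan is in four steps.

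First, Theorem \ref{TEOREMA1} yields that $V$ is a bounded lower semicontinuous viscosity solution of \eqref{BackMChor} in the sense of Definition \ref{barles}. In particular, the upper semicontinuous envelope $V^{*}$ is a viscosity subsolution and, since $V$ is already lsc, $V_{*}=V$ is a viscosity supersolution. Next, one verifies the terminal data. By the definition \eqref{reachSet} at $s=t=T$ the only admissible trajectory is the constant path $\xi\equiv x$, so $V(T,x)=g(x)$, which immediately gives $V_{*}(T,x)=g(x)$. To check $V^{*}(T,x)\le g(x)$, one fixes an admissible control $\cv$, estimates $V_p(s,y)-g(y)$ by using the standard moment bounds for the SDE \eqref{SDEs-Srat} together with the Hölder continuity of $g$, and then passes to the pointwise limit $V=\sup_p V_p$ from Lemma \ref{pointconv}. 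The boundedness of $g$ controls the contribution of the events on which the process drifts macroscopically, while the Hölder continuity of $g$ controls the bulk.

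Once the terminal conditions $V^{*}(T,\cdot)\le g\le V_{*}(T,\cdot)$ are in hand, the assumed comparison principle for \eqref{BackMChor}, applied to the bounded usc subsolution $V^{*}$ and the bounded lsc supersolution $V_{*}$, yields $V^{*}\le V_{*}$ on $[0,T]\times\R^n$. Combined with the tautology $V_{*}\le V\le V^{*}$, this forces $V=V^{*}=V_{*}$, so $V$ is continuous. In particular $V$ is a (classical) continuous viscosity solution in the sense of Crandall--Lions.

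For uniqueness, let $W$ be any other bounded continuous viscosity solution of \eqref{BackMChor} with $W(T,\cdot)=g$. Applying the comparison principle with $V$ as subsolution and $W$ as supersolution gives $V\le W$, while swapping the roles gives $W\le V$, hence $W=V$.

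The main obstacle is the terminal estimate $V^{*}(T,\cdot)\le g$ in the second step. The essential supremum does not interact well with almost sure convergence of $\xi^{s,y,\cv}(T)\to y$ as $s\to T$, so one cannot argue by a.s.\ path continuity of the SDE alone. The natural fix is to work first at the level of the regularized $L^p$ value functions $V_p$, where genuine probabilistic estimates (involving expectations, Jensen and Burkholder--Davis--Gundy) are available, and then to pass to the essential supremum through $V=\sup_p V_p$; keeping the resulting bound uniform in $p$ is the only genuinely technical point in the argument.
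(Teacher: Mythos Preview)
Your overall architecture matches the paper's: show $V^{*}$ is a subsolution and $V_{*}=V$ a supersolution (this is the content of Theorem~\ref{TEOREMA1}), check the terminal inequalities $V^{*}(T,\cdot)\le g\le V_{*}(T,\cdot)$, apply comparison to get $V^{*}\le V_{*}$, hence continuity, and then read off uniqueness from comparison. That is exactly the scheme used in the paper.

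The gap is in your second step, the terminal estimate $V^{*}(T,x)\le g(x)$. The probabilistic route you sketch---fix a control, bound $V_p(s,y)-g(y)$ by SDE moment estimates and the H\"older modulus of $g$, then send $p\to\infty$---cannot be made uniform in $p$, and you correctly flag this as the ``only genuinely technical point'' without resolving it. Concretely: for any admissible control the process $\xi^{s,y,\cv}(T)$ has nontrivial diffusion (the constraint $\tr(I_m-\cv^2)=1$ forces $\cv\neq I_m$), so its law is spread out and for every $T-s>0$ one has $\dP(|\xi^{s,y,\cv}(T)-y|>\delta)>0$. Splitting $g^p(\xi)\le (g(x)+\varepsilon)^p \mathbf{1}_{\{|\xi-y|\le\delta\}}+C^p\mathbf{1}_{\{|\xi-y|>\delta\}}$ then gives a bound on $\dE[g^p(\xi)]$ containing the term $C^p\,\dP(|\xi-y|>\delta)$; for this to be negligible after taking $p$-th roots one needs $\dP(|\xi-y|>\delta)\le c\,\big((g(x)+\varepsilon)/C\big)^{p}$ for all large $p$, which is impossible whenever $g(x)+\varepsilon<C$. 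Equivalently, BDG constants grow like $\sqrt{p}$ and destroy any hope of a $p$-uniform moment bound. So the mechanism you propose does not close.

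The paper avoids this entirely by a PDE barrier argument (Lemma~\ref{datum}): one identifies $V^{*}$ with the half-relaxed upper limit $V^{\sharp}$ of the $V_p$, assumes $V^{\sharp}(T,x_0)\ge g(x_0)+\varepsilon$, and tests against $\varphi(t,x)=\alpha(T-t)+\beta|x-x_0|^2+g(x_0)+\varepsilon/2$. Using that each $V_p$ is a viscosity subsolution of \eqref{PDE_p} at the resulting maxima $(s_k,y_k)$, one derives an inequality of the form $\alpha+C\beta\le 0$ (with $C$ depending only on the data and on a fixed compact set containing the $y_k$), contradicting the choice $\alpha>-C\beta$. This argument uses only the equation and the uniform boundedness of the $V_p$; it never appeals to pathwise or moment estimates for the SDE, which is precisely why it survives the limit $p\to\infty$. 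If you replace your probabilistic step by this barrier argument, the rest of your proof goes through verbatim.
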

%lemma
Let $V^{\sharp}(t,x)$ the  half-relaxed upper-limit of $V_p(t,x)$, i.e.
\begin{equation}
\label{upperV}
V^{\sharp}(t,x)=\limsup_{{\small\begin{aligned}
(s,y)&\to (t,x)\\
p&\to +\infty
\end{aligned}}}V_p(s,y)
\end{equation}
We remark that, in our case $V^{\sharp}(t,x)=V^*(t,x)$, (more details on this point,  will be given in the Appendix). We can in this way prove
the following lemma, which is the key-point in order to apply comparison principles and obtain  the continuity of the solution.
\begin{lemma}
\label{datum}
For any $x\in \R^n$, $V^{\sharp}(T,x)\leq g(x)$.
\end{lemma}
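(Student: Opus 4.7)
The plan is to reduce the statement to upper semicontinuity of $V$ at $(T,x)$ via the monotonicity $V_p \le V$, and then to realize the bound using a feedback control that constrains the process to a horizontal level set of a smooth approximation of $g$.

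First, I would establish $V^{\sharp} = V^*$. By Jensen's inequality, for every fixed $\cv \in \cA$ the function $p \mapsto (\dE[g^p(\xi(T))])^{1/p}$ is non-decreasing (since $g \ge 1$), so $p \mapsto V_p(s,y)$ is non-decreasing; combined with Lemma~\ref{pointconv} this gives $V_p \le V$ pointwise, hence
\[
V^{\sharp}(T,x) \le \limsup_{(s,y) \to (T,x)} V(s,y) = V^*(T,x),
\]
while the converse follows by letting $p \to \infty$ first at fixed $(s,y)$. Since $\xi^{T,x,\cv}(T) = x$ for every $\cv$, we have $V(T,x) = g(x)$ trivially, so the statement reduces to showing $\limsup_{(s,y) \to (T,x)} V(s,y) \le g(x)$.

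Given $\varepsilon > 0$, I would mollify $g$ to obtain a smooth bounded $g_\eta = g \ast \rho_\eta$ with $\|g - g_\eta\|_\infty < \varepsilon/2$ (possible by uniform continuity of $g$). Lemmas~\ref{comstoch} and~\ref{geomstoch} (the latter applied with $\varphi(z) = z + \varepsilon/2$) yield $V(s,y) \le V_{g_\eta}(s,y) + \varepsilon/2$, reducing the problem to bounding $V_{g_\eta}$. For the smooth $g_\eta$, set $n_0(z) := \cX g_\eta(z)/|\cX g_\eta(z)|$ where the horizontal gradient does not vanish and an arbitrary fixed unit vector at characteristic points, and consider the feedback control $\cv(r) := I_m - n_0(\xi(r)) \otimes n_0(\xi(r))$. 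This is a rank-$(m-1)$ orthogonal projection, hence $\cv^2 = \cv$ and $\tr(I_m - \cv^2) = 1$, so $\cv$ belongs to the admissible set. By construction $\cv \cX g_\eta \equiv 0$, so applying the It\^o formula \eqref{generatorformula} with $u = g_\eta$ along the corresponding weak solution $\xi = \xi^{s,y,\cv}$ the stochastic integral vanishes identically, leaving the pathwise identity
\[
g_\eta(\xi(T)) - g_\eta(y) = \int_s^T \tr\bigl[\cv(r)(\cX^2 g_\eta)^*\bigr](\xi(r))\, dr.
\]
If the integrand were uniformly bounded by some constant $K_\eta$, one would immediately get $\ess\sup g_\eta(\xi(T)) \le g_\eta(y) + K_\eta(T-s)$, hence $V_{g_\eta}(s,y) \le g_\eta(y) + K_\eta(T-s) \to g_\eta(x)$ as $(s,y) \to (T,x)$; combining with the mollification estimate would give $\limsup V(s,y) \le g(x) + \varepsilon$, and letting $\varepsilon \to 0$ concludes.

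The main obstacle is precisely this uniform boundedness of the drift. The second-order quantity $(\cX^2 g_\eta)^* = \sigma D^2 g_\eta \sigma^T + A(\cdot, Dg_\eta)$ inherits from $\sigma$ a possible linear growth in space under the standing Lipschitz hypothesis, so the pathwise integrand is not globally bounded on unbounded trajectories of $\xi$. I would handle this by a stopping-time localization: introduce $\tau_R := \inf\{r \ge s : |\xi(r)| \ge R\} \wedge T$, apply the bounded-drift estimate on $[s, \tau_R]$ where the integrand is controlled by a constant $K_{\eta,R}$, and then estimate the contribution of the event $\{\tau_R < T\}$ to the essential supremum by switching the control on that event (for instance to a constant projection) and exploiting the non-explosion and exit-time estimates for the controlled SDE (available from the Lipschitz hypothesis) together with the global bound $|g_\eta| \le \|g\|_\infty + \varepsilon/2$. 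Choosing first $R$ large and then $T - s$ small will make these contributions vanish in the appropriate order, which is the technical heart of the proof.
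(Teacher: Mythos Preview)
Your reduction of the claim to showing $\limsup_{(s,y)\to(T,x)} V(s,y)\le g(x)$ is correct, and the feedback control you propose does yield the pathwise identity
\[
g_\eta(\xi(T)) - g_\eta(y) \;=\; \int_s^T \tr\bigl[\cv(r)\,(\cX^2 g_\eta)^*(\xi(r))\bigr]\,dr,
\]
since $\cv\,\cX g_\eta\equiv 0$ kills the martingale part. The gap is in the localisation step that follows.

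The cost functional is an \emph{essential supremum}, and this does not interact with stopping times the way an expectation does. Under the standing Lipschitz hypothesis the matrix $\sigma^T(\xi)\cv$ has linear growth and is nondegenerate in $m-1$ directions, so for every finite $R$ the event $\{\tau_R<T\}$ has strictly positive probability; non-explosion only says $\sup_{r}|\xi(r)|<\infty$ almost surely, not that there is a deterministic bound. On $\{\tau_R<T\}$ you only have $g_\eta(\xi(T))\le \|g\|_\infty+\varepsilon/2$, and since $\dP(\tau_R<T)>0$ this bound contaminates the essential supremum no matter how small that probability is. Switching the control after $\tau_R$ cannot help either: no admissible $\cv$ can steer the diffusion back to a prescribed sublevel set of $g_\eta$ almost surely in the remaining time. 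In other words, the right-hand side of your pathwise identity is, as a random variable, unbounded in $\omega$ whenever $(\cX^2 g_\eta)^*$ grows (which it does already in the Heisenberg group, since $\sigma$ has linear growth while $D^2 g_\eta$ is merely bounded), so it gives no control on $\ess\sup g_\eta(\xi(T))$. A secondary issue is that the feedback $z\mapsto I_m-n_0(z)\otimes n_0(z)$ is discontinuous on the characteristic set of $g_\eta$, which is typically nonempty, so well-posedness of the feedback SDE also needs justification.

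The paper avoids all of this by a viscosity barrier argument rather than a probabilistic one: assuming $V^{\sharp}(T,x_0)\ge g(x_0)+\varepsilon$, it tests against $\varphi(t,z)=\alpha(T-t)+\beta|z-x_0|^2+g(x_0)+\varepsilon/2$ and uses the uniform boundedness of the $V_p$ to confine the maximum points of $V_{p_k}-\varphi$ to a fixed compact set $K$. The subsolution inequality for $V_{p_k}$ at those points then yields $\alpha+C\beta\le 0$ for a constant $C$ depending only on $\sigma$ and $A$ over $K$, contradicting the choice $\alpha>-C\beta$. The compactness of the maximisers is precisely what replaces your failed localisation; it works because the viscosity subsolution condition is a pointwise PDE inequality, not an $\omega$-wise estimate.
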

For a proof of the lemma, we refer to the Appendix.
\begin{proof} [Proof of Proposition  \ref{TEOREMA2}]
By Theorem \ref{TEOREMA1} and Lemma \ref{datum}, once we have comparison principles, it is trivial to conclude.  
In the viscosity proof of \ref{TEOREMA1} (given in the Appendix), we will show that  $V^*(t,x)=V^{\sharp}(t,x)$ is a viscosity subsolution while $V_*(t,x)=V(t,x)$ is a viscosity  supersolution of  equation \eqref{BackMChor}. Since, by  Lemma \ref{datum},
$V^{\sharp}(T,x)\leq g(x)$ while $g(x)=V(T,x)$, comparison principles imply $V^{\sharp}(t,x)\leq V(t,x)$. 
Moreover $V^{\sharp}(t,x)\geq V(t,x)$ by definition. Hence  $V^{\sharp}(t,x)= V(t,x)$, which means $V(t,x)$ upper semicontinuous. Since $V(t,x)$ is already lower semicontinuous as supremum of continuous functions, we conclude that $V(t,x)$ is continuous.
\end{proof}
%%%%%
%
The previous conditional result becomes a full result in the cases where comparison principles are known, as   in some particular hypersurfaces in Carnot groups (\cite{capognaCitti}).
\begin{coroll}
In Carnot groups and under the assumptions of Theorem 
\ref{TEOREMA1} and Theorem \ref{unicita},
the value function $V(t,x)$ defined in \eqref{reachSet} is the unique continuous viscosity solution of the level set equation \eqref{BackMChor}, with $V(T,x)=g(x)$.
\end{coroll}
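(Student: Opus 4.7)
The plan is to assemble the corollary by chaining together the three preceding results: Theorem \ref{TEOREMA1}, Proposition \ref{TEOREMA2}, and Theorem \ref{unicita}. Since the corollary is stated under the joint hypotheses of Theorems \ref{TEOREMA1} and \ref{unicita}, there is no compatibility condition left to verify by hand; the work consists entirely in noting that these assumptions together supply every ingredient required by Proposition \ref{TEOREMA2}.

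First I would invoke Theorem \ref{TEOREMA1} to conclude that $V(t,x)$ defined by \eqref{reachSet} is a bounded, lower semicontinuous viscosity solution of the level set equation \eqref{BackMChor} with terminal datum $V(T,x)=g(x)$. In particular, since $V$ is already lower semicontinuous, $V_*=V$, and we have a genuine viscosity supersolution of \eqref{BackMChor} attaining the prescribed terminal value.

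Next I would use the upper half-relaxed limit $V^{\sharp}(t,x)$ defined in \eqref{upperV}. By the Appendix construction underlying Theorem \ref{TEOREMA1}, $V^{\sharp}=V^*$ is a viscosity subsolution of \eqref{BackMChor}, and by Lemma \ref{datum} its terminal trace satisfies $V^{\sharp}(T,x)\le g(x)=V(T,x)$. Because the ambient space is a Carnot group and the datum $g$ satisfies the hypotheses of Theorem \ref{unicita}, the comparison principle of Capogna--Citti applies to the pair $(V^{\sharp},V)$, yielding $V^{\sharp}(t,x)\le V(t,x)$ on $[0,T]\times\R^n$. Combined with the reverse inequality $V\le V^{\sharp}$, which is immediate from the definition of the upper relaxed limit, this forces $V^{\sharp}\equiv V$; hence $V$ is simultaneously upper and lower semicontinuous, i.e.\ continuous. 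Uniqueness within the class of continuous viscosity solutions then follows from one further application of the same comparison principle, applied in both directions between $V$ and any competing continuous solution with the same terminal datum $g$.

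The only step that is not entirely formal is the invocation of Theorem \ref{unicita}; the substance of the argument lies there and in Lemma \ref{datum}, both of which have already been established. The remaining work in this corollary is therefore essentially bookkeeping, and I would expect no genuine obstacle beyond citing the results in the correct order.
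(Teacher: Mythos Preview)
Your proposal is correct and follows exactly the paper's approach: you are simply unpacking the proof of Proposition \ref{TEOREMA2} (comparison of $V^{\sharp}=V^*$ against $V=V_*$ via Lemma \ref{datum}, yielding continuity, then uniqueness) and specializing it to the case where the needed comparison principle is supplied by Theorem \ref{unicita}. The paper treats the corollary as an immediate consequence of Proposition \ref{TEOREMA2}, so your write-up is a faithful expansion rather than a different argument.
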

%%%%%%%%%
%  We would like to remark that no one of the hypersurfaces of the Heisenberg group, given in Examples \ref{variuosEx}, satisfies comparisons in \cite{capognaCitti} and so for all those the continuity of the stochastic representation formula is still unknown} {\clb (Take off  this remark and previous corollary, if we got the continuity in time in Carnot groups)\\}. 
%Unfortunately,   comparison principles are still an open problem in most of the cases, hence we cannot conclude that  in general the value function  \eqref{reachSet} is continuous.
%%%%%
%APPENDICE
\section{Appendix.}
In this appendix we give  the proofs omitted  in previous section.\\
Next proof uses ideas from \cite{buckdahn}, applied to the horizontal case, to give a formal viscosity proof of the main theorem.
\begin{proof}[Proof of Theorem \ref{TEOREMA1}]
We have to show that $V(t,x)$ solves the horizontal level set equation in the sense of Definition \ref{barles}.\\
%checked with Nicolas untill here
First we recall that, since $V(t,x)$ is lower semicontinuous in time and space, then $V_*(t,x)=V(t,x)$. 
Let $V^{\sharp}(t,x)$ the  half-relaxed upper-limit of $V_p(t,x)$, defined in \eqref{upperV}. 
We remark that $V^{\sharp}\geq V$ and $V^{\sharp}$ upper semicountinuous. Since the upper semicontinuous envelope $V^*$ is the smallest upper semicontinuous function above $V$, then $V^*(t,x)\leq V^{\sharp}(t,x)$. Moreover since $V_p(t,x)$ is non-decreasing, $V_p(t,x)\leq V(t,x)$, for any $x$, $t$ and $p>1$. Taking the limsup in $x,t,p$, we get also the reverse inequality $V^{\sharp}(t,x)\leq V^*(t,x)$, hence $V^*=V^{\sharp}$. Therefore, to verify Definition \ref{barles}, we have to show that $V(t,x)$ is a viscosity supersolution and $V^{\sharp}(t,x)$ is a viscosity subsolution.\\

First we show that $V(t,x)$ is a supersolution of  \eqref{BackMChor}.
So let $\varphi\in C^1([0,T];C^2(\R^n))$ such that $V-\varphi$ has a local maximum at $(t,x)$. Two different cases occur. \\
If $\cX\varphi(t,x)\neq 0$, we have to verify that 
\begin{equation}
\label{supersolution1}
-\varphi_t(t,x)-\Delta_0 \varphi(t,x)+\Delta_{0,\infty} \varphi(t,x)\geq 0
\end{equation}
while, if $\cX\varphi(t,x)= 0$, we need to check
\begin{equation}
\label{supersolution2}
-\varphi_t(t,x)-\Delta_0 \varphi(t,x)+\lambda_{\max}\big((\cX^2\varphi)^*(t,x)\big)\geq 0
\end{equation}
Note that, for any $p>1$, there exists $(t_p,x_p)$ such that $V_p-\varphi$ has a local minimum at $(t_p,x_p)$ and $(t_p,x_p)\to (t,x).$ 
In fact, we can always assume that $(t,x)$ is a strict minimum in some $B_R(t,x)$. Set $K=\overline{B_{\frac{R}{2}}(t,x)}$,  the sequence of minimum points $(t_p,x_p)$ converge to some $(\overline{t},\overline{x})\in K$. 
As $V$ is the limit of the 
$V_p$  and   lower semicontinuous,  therefore a standard argument yields that $(\overline{x},\overline{t})$ is a minimum, hence it equals 
$(x,t).$
Since $V_p$ is a solution of \eqref{PDE_p},  then 
$$
-\varphi_t(t_p,x_p)+H(x_p, (p-1)V_p^{-1} D\varphi (D \varphi)^T +D^2\varphi)(t_p,x_p)\geq 0
$$
In the case $\sigma (x) D\varphi(t,x)\neq 0$, we can write the Hamiltonian in a more explicit way. Set
\begin{align*}
&S_1=(p-1) V_p^{-1}\cX\varphi(t_p,x_p)(\cX\varphi(t_p,x_p))^T
\\
&S_2=(\cX^2\varphi)^*(t_p,x_p)
\end{align*}
\begin{multline}
\label{calcolo}
H(x_p, S_1,S_2)=-\tr ( S_1+S_2)+\lambda_{\max}( S_1+S_2)=-\tr(S_1)-\tr(S_2)+\lambda_{\max}( S_1+S_2)\\=
-(p-1)V_p^{-1}(t_p,x_p)|\cX\varphi(t_p,x_p)|^2-\Delta_0 \varphi(t_p,x_p)
+\lambda_{\max}( S_1+S_2)
\end{multline}
since the  trace operator is linear and $\tr((\cX\varphi(x_p))(\cX\varphi)^T(x_p))=|\cX\varphi(x_p)|^2$.\\
Now we need the following result.
\begin{lemma}[\cite{buckdahn}, Lemma 1.2]
Let $S$ a symmetric $m\times m$-matrix  such that the space of the eigenvectors associated to the maximum eigevalue is of dimension one. Then,  $S\to \lambda_{\max}(S)$ is $C^1$ in a neighborhood of $S$. Moreover, $D\lambda_{\max}(S)(H)=\big<Ha,a\big>$, for any $a\in \R^m$ eigenvector associated to $  \lambda_{\max}(S)$ and $|a|=1$.
\end{lemma}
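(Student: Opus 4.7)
The plan is to prove this via the implicit function theorem applied to the joint eigenpair system for symmetric perturbations of $S$. Simplicity of $\lambda_0 := \lambda_{\max}(S)$ is exactly what makes the relevant linearisation invertible, and once a smooth local selection of an eigenpair is in hand, the derivative formula will fall out of a one-line computation.

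Concretely, I would consider the map $F : \mathrm{Sym}(m) \times \R^m \times \R \to \R^m \times \R$ given by $F(T, b, \mu) = (Tb - \mu b,\, \tfrac12(|b|^2 - 1))$ and linearise in $(b, \mu)$ at the base point $(S, a, \lambda_0)$. The partial derivative sends $(h, \nu)$ to $\bigl((S - \lambda_0 I) h - \nu a,\, \langle a, h\rangle\bigr)$. Because by simplicity $a$ spans $\ker(S - \lambda_0 I)$, the operator $S - \lambda_0 I$ restricts to an isomorphism of $a^\perp$; a short check (take the inner product of the first component with $a$ to solve for $\nu$, then split $h = c\, a + h'$ with $h' \in a^\perp$ to solve for $h'$) shows that this linearisation is a linear isomorphism of $\R^m \times \R$. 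The implicit function theorem then yields $C^1$ functions $T \mapsto (a(T), \lambda(T))$ defined in a neighbourhood of $S$ in $\mathrm{Sym}(m)$, with $\lambda(S) = \lambda_0$, $a(S) = a$, and satisfying $T\, a(T) = \lambda(T)\, a(T)$ together with $|a(T)| = 1$. Continuity of the spectrum forces $\lambda(T) = \lambda_{\max}(T)$ for $T$ close to $S$, so $\lambda_{\max}$ is $C^1$ there.

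For the derivative formula, differentiate $T\, a(T) = \lambda(T)\, a(T)$ along a direction $H \in \mathrm{Sym}(m)$ at $T = S$: writing $\dot a := Da(S)(H)$ and $\dot\lambda := D\lambda_{\max}(S)(H)$, one obtains $H a + S \dot a = \dot\lambda\, a + \lambda_0 \dot a$. Taking the inner product with $a$, the symmetry of $S$ yields $\langle S \dot a, a\rangle = \langle \dot a, S a\rangle = \lambda_0 \langle \dot a, a\rangle$, while differentiating $|a(T)|^2 = 1$ at $T = S$ gives $\langle \dot a, a\rangle = 0$. Both $\dot a$ contributions therefore cancel, leaving $\dot\lambda = \langle H a, a\rangle$ as claimed. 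Note that the right-hand side depends on $a$ only through the rank-one symmetric matrix $a a^T$, so it is insensitive to the sign ambiguity in the choice of unit eigenvector.

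The main obstacle is really the invertibility step in the IFT, which is where the simplicity hypothesis is essential: without it, $\ker(S - \lambda_0 I)$ would meet $a^\perp$ non-trivially and the Jacobian would degenerate, reflecting the classical non-differentiability of $\lambda_{\max}$ at matrices with repeated top eigenvalue. An alternative route would be to invoke Rellich's analytic perturbation theorem for isolated eigenvalues of self-adjoint operators, which would actually upgrade the regularity to real-analytic, but the direct IFT argument above is self-contained and sufficient for the $C^1$ statement needed here.
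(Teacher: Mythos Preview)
Your proof is correct. The paper does not supply its own proof of this lemma; it is simply quoted from \cite{buckdahn} and used as a black box in the Appendix, so there is nothing to compare against. Your implicit-function-theorem argument is the standard route: the simplicity hypothesis is precisely what makes the linearisation $(h,\nu)\mapsto\bigl((S-\lambda_0 I)h-\nu a,\,\langle a,h\rangle\bigr)$ an isomorphism, and the Hellmann--Feynman-type computation for the derivative is clean and complete.
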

 We apply previous lemma to the matrix
$$S_p=V_p^{-1}(t,x) \cX \varphi(t_p,x_p)( \cX\varphi (t_p,x_p))^T $$
which clearly satisfies the assumptions of previous lemma.\\ 
Expanding the Hamiltonian \eqref{calcolo} around $S_p$  and then, passing to the limit as $p\to+\infty$, we get exactly \eqref{supersolution1}.\\
The case $\cX\varphi(t,x)= 0$
is much easier. We have just to use the subadditivity of the function   $S\to \lambda_{\max}(S)$ and remark that, since $V_p$ is supersolution
\begin{multline*}
0\leq -\varphi_t+H(x_p,DV_p,(p-1)V_p^{-1}D\varphi(D\varphi)^T+D^2\varphi)\\
\leq -\varphi_t-(p-1)V_p^{-1}|\cX \varphi|^2-\tr\big((\cX^2 \varphi)^*\big)
+\lambda_{\max}\big((p-1)V_p^{-1}\cX\varphi(\cX\varphi)^T+(\cX^2 \varphi)^*\big)
\\
\leq -\varphi_t-(p-1)V_p^{-1}|\cX \varphi|^2-\Delta_0\varphi+(p-1)V_p^{-1}|\cX \varphi|^2+\lambda_{\max}((\cX^2 \varphi)^*)
\end{multline*}
at the point $(t_p,x_p)$.
So, passing to the limit as $p\to +\infty$,  we find \eqref{supersolution2}.\\

To verify the subsolution property for $V^{*}=V^{\sharp}$,  
let  $\varphi\in C^1([0,T];C^2(\R^n))$ such that $V^{\sharp}-\varphi $ has a maximum at $(t_0,x_0)$ and we may assume that such a maximum is strict. Let $(t_p,x_p)$ a sequence of maximum points of $V_p-\varphi$, we can find a subsequence converging to $x$. Hence, since
$V_p$ are solutions of \eqref{PDE_p}, we have 
\begin{equation}
\label{fame}
0\leq -\varphi_t+H(x,(p-1)\varphi^{-1} D\varphi (D \varphi)^T +D^2\varphi) 
\end{equation}
at $(t_p,x_p)$.
We define, for any  $z>0$, $x,d\in \R^n$ and any  $n\times n$ symmetric matrix $S.$
\begin{multline}
\label{geronimo}
H_p(x,z, d,S)= -\frac{(p-1)}{z} |\sigma(x)d+A(x,d)|^2-\tr (\sigma^T (x)S\sigma(x)+A(x,d))\\+\lambda_{\max}\bigg( \frac{(p-1)}{z} (\sigma(x) d)(\sigma(x) d)^T+\sigma^T(x) S\sigma(x)+A(x,d)\bigg)
\end{multline}
and
\begin{equation}
\overline{H}(x,d,S)\!\!=\! \!\left\{\!
\begin{aligned}
-\tr (\sigma^T (x) S\sigma(x)&+A(x,d))+\\
&\bigg<\!(\sigma^T(x) S\sigma(x)\!+\!A(x,d))\frac{\sigma(x)d}{|\sigma(x)d|},\frac{\sigma(x)d}{|\sigma(x)d|}\!\bigg>, \, |\sigma(x) d|\neq \!0\\
&-\tr (\sigma^T (x)S\sigma(x))+\lambda_{\max}(\sigma^T (x) S\sigma(x)), \quad |\sigma(x)d|=\!0
\end{aligned}
\right.
\end{equation}
Since $\lambda_{\max}(\sigma^T S\sigma+A)\geq \lambda_{\min}(\sigma^T S\sigma+A)$, it is clear that $$
\overline{H}_*(x,d,S)=H_*(x,d,S)$$
Moreover, as in \cite{buckdahn}, we can observe that
$$
H_p(x,z,d,S)\geq \overline{H}(x,d,S),\quad\textrm{for any}\;z
$$
which is trivial for $|\sigma(x)d|=0$ (by \eqref{geronimo}) and for $|\sigma(x)d|\neq 0$, it follows taking $a=\frac{\sigma(x)d}{|\sigma(x)d|}$ in the definition of maximum eigenvalue as $\lambda_{\max}(\widetilde{S})=\max_{|a|=1}\big<\widetilde{S}a,a\big>$.\\
Set $z=\varphi^{-1}(t_p,x_p)>0$, $d= D \varphi (t_p,x_p)$, $S=D^2 \varphi (t_p,x_p) $, by \eqref{fame} taking the limsup as $p\to +\infty$, we can deduce that
$$
0\geq \varphi_t+ H_*(x, D\varphi, D^2\varphi)
$$
at $(t,x)$.
That concludes the proof since it shows that the value function $V(t,x)$ satisfies Definition \ref{barles}.
\end{proof}
We conclude by giving a proof of  Lemma \ref {datum}, the key point for obtaining the continuity of the functions defined by the 
stochastic representation formula via the comparison principle.
\begin{proof} [Proof of Lemma \ref {datum}]
Assuming that it is not true, there exists a point $x_0$ such that 
$$V^{\sharp}(T,x_0) \geq g(x_0)+\varepsilon$$
for $\varepsilon>0$ sufficiently small. 
Then we use as test function
$$
\varphi(t,x)=\alpha(T-t)+\beta |x-x_0|^2 +g(x_0)+\frac{\varepsilon}{2}
$$
with $\alpha>-C \beta $, with $C$ a constant depending just on the data of the problem and the point $x_0$ (in the Euclidean case $C=-2(n-1)$) and $\beta>1$ sufficiently large. 
Now we can find  a sequence $(t_x,x_k) \to (T,x_0)$ and $p_k \to +\infty$ as $k\to +\infty$ such that $V_{p_k} -\varphi$ has a positive local maximum at some point $(s_k,y_k)$, for any $k>1$ (see \cite{buckdahn} for more details). To get the contradiction we need to use the fact that $V_p$ is a solution (so in particular a subsolution) of  equation \eqref{PDE_p} and, by the subsolution condition, get 
$$
\alpha + C \beta\leq 0
$$
which contradicts the choice $\alpha> -C\beta$.\\
Unfortunately in our case, unlike in the Euclidean case, the test function, inserted in the equation for $V_p$, does not give a constant number since the Hamiltonian  depends on the space-variable. 
Nevertheless, we can observe that the functions $V_p$ are bounded uniformly in $p$ so, by the growth of $|x-x_0|,$ the maximum points  are such that $y_k \in \overline{B_R(x_0)}=:K$, with $R$ independent of $k$. \\
Using this remark and  the continuity  of the terms which we get calculating $H$ in $\varphi$ and at the point $(s_k,y_k)$, 
we will be able to show the following lower bound
$$
\alpha+C\beta \leq -\varphi_t+H((p-1)\varphi^{-1} D\varphi (D \varphi)^T +D^2\varphi)\leq 0
$$
and so conclude.\\
First we need to remark that
$$
\varphi_t(t,x)=-\alpha,\quad D\varphi(t,x)=2\beta |x-x_0|,\quad
D^2\varphi(t,x)=Id
$$
Remarking that  at the point $(s_k,y_k)$, we have
\begin{multline*}
0\geq (p-1)\varphi^{-1} D\varphi (D \varphi)^T +D^2\varphi)\geq
\alpha -\tr\big(\sigma(y_k)\sigma^T(y_x)+A(x,x-x_0)\big)\\
+\lambda_{\min}\big(\sigma(x)\sigma^T(x)+(A(x,x-x_0)\big)
\end{multline*}
Recalling that there is a compact set $K$ such that  $y_k\in K$ for all $k,$ %compact independent of $k$, 
we get by continuity
$$
0\geq (p-1)\varphi^{-1} D\varphi (D \varphi)^T +D^2\varphi)\geq \alpha +2C\beta
$$
with 
{\small$$
C\!=\!-\max_{x\in K} \tr(\sigma(x)\sigma^T(x))-\!\max_{x\in K}A(x,x-x_0)+\min_{x\in K}\! \lambda_{\min}(\sigma(x)\sigma^T(x))+\min_{x\in K}\lambda_{\min}(A(x,x-x_0)) 
$$}
With such an estimate, we are able to obtain the same contradiction as in the Euclidean case,  choosing $\alpha> -C\beta$.
\end{proof}

%BIBLIO


\begin{thebibliography}{30}
%%
\bibitem[ATW]{ATW:93}
F.~Almgren, J.E. Taylor,  L.~Wang.
\newblock {\em Curvature-driven flows: a variational approach.}
\newblock SIAM J. Control Optim. 31 (1993), 387--437.


\bibitem[AAG]{AAG95}
S.~Altschuler, S.B.~Angenent, Y.~Giga. 
\newblock { \em Mean curvature flow throughsingularities for surfaces of rotation.}  
\newblock J. Geom. Anal. 5 (1995), 293--358

 \bibitem[Ba]{barles}
G.~Barles.
\newblock \emph{Solutions de Viscosit\'e des \'Equations de Hamilton-Jacobi.} 
\newblock Springer, Berlin,1994.

 \bibitem[BS]{souganidis}
G.~Barles, P.E.~Souganidis, 
 \newblock \emph{A new approach to front propagation problems: theory and applications.}
 \newblock Arch. Rational Mech. Anal. 141 (1998), n. 3, 237--296.
 
\bibitem[Be]{bel}
A.~Bella\" {\i}che.
\newblock \emph{The Tangent Space in
Sub-Riemannian Geometry.} 
\newblock In \emph{Sub-Riemannian Geometry};
A.~Bella\" {\i}che, J.J.~Risler Eds.; 
Progress in Mathematics;  Birkh\" {a}user, Basel 
1996; Vol. 144, 1--78.


%

 \bibitem[BN]{BN:97}
G.~Bellettini, M.~Novaga.
\newblock {\em Minimal barriers for geometric evolutions.}
\newblock  Journal of Differential Equations 139 (1994), 76--103.



 \bibitem[BP2]{BePa:95}
G.~Bellettini, M.~Paolini.
\newblock {\em Some results on minimal barriers in the sense of {D}e {G}iorgi
  applied to driven motion by mean curvature.}
\newblock Rend. Accad. Naz. Sci. XL Mem. Mat. 5 (1995), n. 19, 43--67.

%
\bibitem[Br]{Brakke}
K.~Brakke.
\newblock {\em The Motion of a Surface by its Mean Curvature}, 
\newblock Vol. 6 University Press, Princeton, 1978. 

  \bibitem[BCQ]{buckdahn}
R.~Buckdahn, P.~Cardaliaguet, M.~Quincampoix.
 \newblock{\em  A representation formula for the mean curvature 
motion}.
 \newblock  SIAM J. Math Anal. 33 (2001), n. 4, 827--846. 

 
 
  \bibitem[CC]{capognaCitti}
L.~Capogna, G.~Citti.
 \newblock{\em Generalized mean curvature flow in Carnot groups}.
 \newblock preprint.
 
 
 
  \bibitem[CDPT]{capogna}
L.~Capogna, D.~Danielli, S.D.~Pauls, J.T.~Tyson.
 \newblock{\em An Introduction to the Heisenberg Group and the Sub-
Riemannian Isoperimetric Problem}.
 \newblock  Progress in Mathematics 259,  Birkh\"auser 
Verlag, Besel, 2007.

\bibitem[CGG]{Giga}
Y.-G.~Chen, Y.~Giga, S.~Goto.
\newblock{\em Uniqueness and existence of viscosity solution of generalized mean curvature flow equations}.
\newblock  J. Differential Geom. 33 (1991), 749--786.
 
 \bibitem[CS]{citti}
G.~Citti, A.~Sarti.
\newblock{\em A cortical based model of perceptual completion in the 
roto-traslation space}.
\newblock J. Math. Imaging Vision 24 (2006), n. 3, 307--326.
 
 
 
 \bibitem[DGN]{garofalo}
  D.~Danielli, N.~Garofalo, D.M.~Nhieu.
 \newblock{\em  Sub-Riemannian calculus on hypersurfaces in Carnot groups}.
 \newblock Advances in Math. 215 (2007), 292--378.


%CHANGE IN THE ONE WITH BARLES
 \bibitem[ES]{EvSp:91}
L.-C. Evans and J.~Spruck.
\newblock {\em Motion of level sets by mean curvature {I}}.
\newblock  J. Differ. Geom. 33  (1991), n. 3, 635--681.

 
 \bibitem[FS]{sonerbook}
 W.H.~Fleming, H.M.~Soner.
 \newblock{\em control Markov Processes and Viscosity Solutions.} Springer, New York, 1993.
 
 
 %
 \bibitem[DG]{DG:94}
E.~De Giorgi.
\newblock {\em Barriers, boundaries, motion of manifolds.}
\newblock   Conference held at Department of Mathematics of Pavia, March
  18, 1994.

  
%\bibitem[Dri04]{MR2090752}
%Bruce~K. Driver.
%\newblock Curved {W}iener space analysis.
%\newblock In {\em Real and stochastic analysis}, Trends Math., %pages 43--198.
  %Birkh\"auser Boston, Boston, MA, 2004.
  
 \bibitem[Gi]{Gigabook}
 Y.~Giga.
 \newblock{\em Surface Evolution Equations: a Level Set Approach}.
 \newblock Birkh\"auser, Boston, 2006.
 
  \bibitem[Gr]{dumbbell}
M.A.~Grayson.
\newblock {\em 
A short note on the evolution of a surface by its mean curvature}.
\newblock Duke Math. Journal 58 (1989), n. 3, 555-558.

\bibitem[He]{CarnotGroup}
J.~Heinonen.
\newblock{\em Calculus on Carnot Groups}.
\newblock Fall School in Analysis, Jyv\"askyl\"a (1994), 1--31.


\bibitem [HP]{pauls}
R.K.~Hladky, S.D.Pauls
\newblock{\em Constant mean curvature surfaces in sub-Riemnnian geometries}
\newblock J. Diff. Geom. 79 (2008), n.1, 111--139.

\bibitem[H]{Hsu}
E.P.~Hsu.
\newblock {\em Stochastic analysis on manifolds}.
\newblock American Mathematical Society, Providence, RI, 2002.


\bibitem[KS]{MR1121940}
I.~Karatzas, S.E.~Shreve.
\newblock {\em Brownian motion and stochastic calculus}, volume 113 of {\em Graduate Texts in Mathematics}.
\newblock Springer-Verlag, New York, 1991.
 
 
 % 
  \bibitem[LS]{LS:95}
S.~Luckhaus, T.~Sturzenhecker.
\newblock {\em Implicit time discretization for the mean curvature flow equation.}
\newblock  Calc. Var. Partial Differ. Equ. 3 (1995), n. 2, 253--271. 

\bibitem[M]{montgomery}
R.~Montgomery.
\newblock{\em A Tour of Subriemannian Geometries, Their Geodesics 
and Application}.
\newblock American Mathematical Society, United States of America,
2002.

%\bibitem[ST1]{soner2}
% H.M.~Soner, N.~Touzi.
%\newblock{\em Dynamic programming for stochastic target problems 
%and geometric flows}.
%\newblock J. Eur. Math. Soc. (2002),  201-236.

  \bibitem[ST2]{soner3}
 H.M.~Soner, N.~Touzi.
 \newblock{\em A stochastic representation for the level set equations}.
 \newblock Comm. Part. Diff. Eq. 27 (2002), n. 9,  2031--2053.
 
 
   \bibitem[ST3]{soner1}
 H.M.~Soner, N.~Touzi.
 \newblock{\em A stochastic representation for mean curvature type 
geometric flows}.
 \newblock Annals Prob. 31 (2003), n. 3, 1145--1165.


 

   \bibitem[T]{touziNote}
 N.~Touzi.
 \newblock{\em Stochastic Control Problems, Viscosity Solutions, and 
Application to Finance}.
 \newblock Scuola Normale Superiore di Pisa. Quaderni, Pisa, 2004.

\bibitem[W]{Wang}
C.~Wang.
\newblock{\em The Aronsson equation for absolute minimizers of ${L^
\infty}$-functionals associated with vector fields satisfying H\"{o}rmander's condition.} 
\newblock Trans. Amer. Math. Soc. (2007), n. 359, 91--113. 





 \bibitem[W]{Lie}
F.W. ~Warner.
\newblock{\em Foundations of Differentiable Manifolds and Lie Groups.} 	
\newblock Springer, New York, 1983.  




 \bibitem[YZ]{Yong}
J.~Yong, X.Y.~Zhou.
\newblock{\em Stochastic Controls: Hamiltonian System and HJB Equations.} 	
\newblock Springer, New York, 1999. 
%controla anno


%---------------------------------
\end{thebibliography}
\end{document}